\newtheorem{theorem}{Theorem}[section]
\newtheorem{proposition}[theorem]{Proposition}
\newtheorem{lemma}[theorem]{Lemma}
\newtheorem{question}[theorem]{Question}
\newtheorem{corollary}[theorem]{Corollary}
\theoremstyle{definition}
\newtheorem{claim}[theorem]{Claim}
\newtheorem{fact}[theorem]{Fact}
\theoremstyle{remark}
\newtheorem{remark}[theorem]{Remark}
\numberwithin{equation}{section}
\newcommand{\ZFC}{\mathsf{ZFC}}
\newcommand{\CH}{\mathsf{CH}}
\newcommand{\restr}[2]{#1\upharpoonright {#2}}
\newenvironment{proofclaim}[1][Proof of the claim]{\textbf{#1.} }{\hfill \rule{0.5em}{0.5em}}
\newenvironment{prooffact}[1][Proof of the fact]{\textbf{#1.} }{\hfill \rule{0.5em}{0.5em}}
\newenvironment{proofmain}[2][Proof of the Theorem]{\textbf{#1 {#2}.} }{\hfill \rule{0.5em}{0.5em}}
\begin{document}
\title[Countably compact groups]{Countably compact groups without non-trivial convergent sequences}


\author[Hru\v{s}\'{a}k]{M. Hru\v{s}\'{a}k}
\address{Centro de Ciencias Matem\'aticas\\ Universidad Nacional Aut\'onoma de M\'exico\\ Campus Morelia\\Morelia, Michoac\'an\\ M\'exico 58089}
\curraddr{}
\email{michael@matmor.unam.mx}
\thanks{{The research of the first author was supported  by a PAPIIT grant IN100317 and CONACyT grant A1-S-16164. The third named author was partially supported by the PAPIIT grants  IA100517 and IN104419. Research of the fourth author was partially supported by European Research Council grant 338821. Paper 1173 on the fourth author's list}}
\urladdr{http://www.matmor.unam.mx/~michael}

\author[van Mill]{J. van Mill}
\address{KdV Institute for Mathematics, University of Amsterdam, Science Park 105-107, P.O. Box 94248, 1090 GE Amsterdam, The Netherlands}
\curraddr{}
\email{j.vanMill@uva.nl}
\thanks{}

\author[Ramos-Garc\'{\i}a]{U. A. Ramos-Garc\'{\i}a}
\address{Centro de Ciencias Matem\'aticas\\ Universidad Nacional Aut\'onoma de M\'exico\\ Campus Morelia\\Morelia, Michoac\'an\\ M\'exico 58089}
\curraddr{}
\email{ariet@matmor.unam.mx}
\thanks{}

\author[Shelah]{S. Shelah}
\address{Einstein Institute of Mathematics, Edmond J. Safra Campus, The Hebrew University of Jerusalem, Givat Ram, Jerusalem, 91904, Israel and Department of Mathematics, Hill Center - Busch Campus, Rutgers, The State University of New Jersey, 110 Frelinghuysen Road, Piscataway, NJ 08854-8019, USA}
\curraddr{}
\email{shelah@math.huji.ac.il}
\thanks{}
\urladdr{http://shelah.logic.at}

\subjclass[2010]{Primary 22A05, 03C20; Secondary 03E05, 54H11}

\date{\today}

\keywords{Products of countably compact groups, $p$-compact groups, ultrapowers, countably compact groups without convergent sequences}


\begin{abstract}
We construct, in {\sf ZFC}, a countably compact subgroup of $2^\mathfrak c$ without non-trivial convergent sequences, answering an old problem of van Douwen. As a consequence we also prove the existence of two countably compact groups $\mathbb G_0$ and  $\mathbb G_1$ such that the product  $\mathbb G_0 \times \mathbb G_1$ is not countably compact, thus answering a classical problem of Comfort.
\end{abstract}

\maketitle

\section{ Introduction}
The celebrated Comfort-Ross theorem \cite{MR207886,MR776643} states that any product of pseudo-compact topological groups is pseudo-compact, in stark contrast with the examples due to Nov\'ak \cite{MR0060212} and Terasaka \cite{MR0051500} who constructed pairs of countably compact spaces whose product is not even pseudo-compact. This motivated Comfort \cite{Letter-Comfort} (repeated in \cite{MR1078657}) to ask:

\begin{question}[Comfort \cite{MR1078657}] Are there countably compact groups $\mathbb G_0, \mathbb G_1$ such that $\mathbb G_0\times\mathbb G_1$ is not countably compact?
\end{question}

The first consistent positive answer was given by van Douwen \cite{MR586725} under {\sf MA}, followed by  Hart-van Mill \cite{MR982236} under {\sf MA${}_{ctble}$}. In his paper  van Douwen showed that
every Boolean countably compact group without non-trivial convergent sequences contains two countably compact subgroups whose product is not countably compact, and asked:

\begin{question}[van Douwen \cite{MR586725}] Is there  a countably compact group without non-trivial convergent sequences?
\end{question}

In fact, the first example of such a group was constructed by Hajnal and Juh\'asz \cite{MR0431086} a few years before van Douwen's \cite{MR586725} assuming {\sf CH}. Recall, that every compact topological group contains a non-trivial convergent sequence, as an easy consequence of the classical and highly non-trivial theorem of Ivanovski\u{\i}-Vilenkin-Kuz'minov  (see \cite{MR0104753}) that every compact topological group is \emph{dyadic}, \emph{i.e.,} a continuous image of $2^{\kappa}$ for some cardinal number $\kappa$.

\smallskip

Both questions have been studied extensively in recent decades, providing a large variety of sufficient conditions for the existence of examples to  these questions, much work being done by Tomita and collaborators \cite{MR2029279,MR2113947,MR1925707,MR2921841,MR2519216,MR1426926,MR1664516,MR1974663,MR2107169,MR2163099,MR2133678,MR2080287}, but also others \cite{MR2284939,MR1719996,MR2139740,MR792239,MR1083312}. The questions are considered central in the theory of topological groups \cite{MR3241473,MR2433295,MR776643,MR1078657,dikranjan2007selected,MR1666795,MR1900269}.

\smallskip

Here we settle both problems by constructing in {\sf ZFC} a countably compact subgroup of $2^\mathfrak c$ without non-trivial convergent sequences. 

\smallskip

The paper is organized as follows: In Section 2 we fix notation and review basic facts concerning ultrapowers, Fubini products of ultrafilters and Bohr topology. In Section 3 we study van Douwen's problem in the realm of 
$p$-compact groups. We show how iterated ultrapowers can be used to give interesting partial solutions to the problem. In particular, we show that an iterated ultrapower of the countable Boolean group endowed with the Bohr topology via a selective ultrafilter $p$ produces a $p$-compact subgroup of $2^{\mathfrak{c}}$ without non-trivial convergent sequences. This on the one hand raises interesting questions about ultrafilters, and on the other hand serves as a warm up for Section 4, where the main result of the paper is proved by constructing
a countably compact subgroup of $2^{\mathfrak{c}}$ without non-trivial convergent sequences using not a single ultrafilter, but rather a carefully constructed $\mathfrak{c}$-sized family of ultrafilters.

\section{Notation and terminology}

Recall that an infinite topological space $X$ is \emph{countably compact} if every infinite subset of $X$ has an accumulation point. Given $p$ a nonprincipal ultrafilter on $\omega$ (for short, $p\in \omega^*$), a point $x\in X$ and a sequence
$\{x_n: n\in\omega\}\subseteq X$ we say (following \cite{MR0251697})  that  $x=p$-$\lim_{n \in \omega} x_n$  if for every open $U\subseteq X$ containing $x$ the set $\{n \in \omega \colon x_n\in U\}\in p$. It follows that a space $X$ is countably compact if and only if every sequence $\{x_n: n\in\omega\}\subseteq X$ has a $p$-limit in $X$ for some ultrafilter $p\in\omega^*$. Given an ultrafilter  $p\in\omega^*$, a space $X$ is \emph{$p$-compact}
if for every sequence $\{x_n: n\in\omega\}\subseteq X$ there is an $x\in X$ such that $x=p$-$\lim_{n \in \omega} x_n$.

\medskip 

For introducing the following definition, we fix a bijection $\varphi: \omega\to\omega\times\omega$, and for a limit ordinal $\alpha<\omega_1$, we pick an increasing sequence $\{\alpha_n: n\in\omega\}$ of smaller ordinals with supremum $\alpha$.
Given an ultrafilter $p\in\omega^*$, the \emph{iterated Fubini powers} or \emph{Frol\'{\i}k sums} \cite{MR0203676} of $p$ are defined recursively as follows:
$$ p^1=p$$
$$p^{\alpha+1} =\{A\subseteq \omega: \{n:\{m: (n,m)\in \varphi(A)\}\in p^{\alpha}\}\in p\}\text{ and }$$
$$p^{\alpha} =\{A\subseteq \omega: \{n:\{m: (n,m)\in \varphi(A)\}\in p^{\alpha_n}\}\in p\}\text{ for } \alpha \text{ limit.}$$
The choice of the ultrafilter $p^\alpha$ depends on (the arbitrary) choice of $\varphi$ and the choice of the sequence $\{\alpha_n: n\in\omega\}$, however, the \emph{type} of $p^\alpha$ does not (see \emph{e.g.}, \cite{MR0203676,MR1227550}).

\medskip
For our purposes we give an alternative definition of the iterated Fubini powers of $p$: given $\alpha < \omega_{1}$ we fix a well-founded tree $T_{\alpha} \subset \omega^{<\omega}$ such that 

\begin{enumerate}[(i)]
    \item $\rho_{T_{\alpha}}(\varnothing)=\alpha$, where $\rho_{T_{\alpha}}$ denotes the rank function on $\langle T_{\alpha},\subseteq\rangle$;
    \item For every $t \in T_{\alpha}$, if $\rho_{T_{\alpha}}(t) > 0$ then $t^{\frown}n \in T_{\alpha}$ for all $n \in \omega$.
\end{enumerate}

For $\beta \leqslant \alpha$, let 
$\Omega_{\beta}(T_{\alpha})=\{t \in T_{\alpha} \colon \rho_{T_{\alpha}}(t)=\beta\}$ and 
$T_{\alpha}^{+}=\{t \in T_{\alpha} \colon \rho_{T_{\alpha}}(t) > 0\}$.

\medskip

If $p \in \omega^{*}$, then $\mathbb{L}_{p}(T_{\alpha})$ will be used to denote the collection of all trees $T \subseteq T_{\alpha}$ such that for every $t \in T \cap T_{\alpha}^{+}$ the set $\text{succ}_{T}(t)=\{n \in \omega \colon t^{\frown}n \in T\}$ belongs to $p$. Notice that each $T \in \mathbb{L}_{p}(T_{\alpha})$ is also a well-founded tree with $\rho_{T}(\varnothing)=\alpha$. Moreover, the family $\{\Omega_{0}(T) \colon T \in \mathbb{L}_{p}(T_{\alpha})\}$ forms a base of an ultrafilter on $\Omega_{0}(T_{\alpha})$ 
which has the same type of $p^{\alpha}$. If $T \in \mathbb{L}_{p}(T_{\alpha})$ and $U \in p$, $\restr{T}{U}$ denotes  the tree in $\mathbb{L}_{p}(T_{\alpha})$ for which 
$\text{succ}_{\restr{T}{U}}(t)= \text{succ}_{T}(t) \cap U$ for all 
$t \in (\restr{T}{U})^{+}$.

\medskip

Next we recall the \emph{ultrapower} construction from model theory and algebra. Given a group $\mathbb G$ and an ultrafilter $p\in\omega^*$, denote by
$$\mathsf{ult}_p(\mathbb G)=\mathbb G^\omega/\equiv \text{, where } f\equiv g \text{ iff } \{n: f(n)=g(n)\}\in p.$$
The \emph{Theorem of \L\'os} \cite{MR0075156} states that for any formula $\phi$ with parameters $[f_0], [f_1],\dots$ $[f_n]$, $\mathsf{ult}_p(\mathbb G)\models \phi ([f_0], [f_1],\dots [f_n])$ if and only if
$\{k : \mathbb G\models\phi (f_0(k), f_1(k),$ $\dots$ $f_n(k))\}\in p$. In particular, $\mathsf{ult}_p(\mathbb G)$ is a group with the same first order properties as $\mathbb G$. 

\medskip

 There is a natural embedding of $\mathbb G$ into $\mathsf{ult}_p(\mathbb G)$ sending each $g\in \mathbb G$ to the equivalence class of the constant function with value $g$. We shall therefore consider $\mathbb G$ as a subgroup of $\mathsf{ult}_p(\mathbb G)$. Also, without loss of generality, we can assume that $\text{dom}(f) \in p$ for every $[f] \in \mathsf{ult}_p(\mathbb G)$.

\medskip

Recall that the \emph{Bohr topology} on a group $\mathbb{G}$ is the weakest group topology making every homomorphism $\Phi \in \text{Hom}(\mathbb{G},\mathbb{T})$ continuous, where the circle group $\mathbb{T}$ carries the usual compact topology. We let $(\mathbb{G},\tau_{\, \text{Bohr}})$ denote $\mathbb{G}$ equipped with the Bohr topology.

\medskip

Finally, our set-theoretic notation is mostly standard and follows \cite{MR597342}. In particular, recall that an ultrafilter $p \in \omega^{*}$ is a \emph{$P$-point} if 
every function on $\omega$ is finite-to-one or constant when restricted to some set in the ultrafilter and, an ultrafilter $p \in \omega^{*}$ is a \emph{$Q$-point} if every finite-to-one function on $\omega$ becomes one-to-one when restricted to a suitable set in the ultrafilter. The ultrafilters $p \in \omega^{*}$ which are P-point and Q-point are called \emph{selective} ultrafilters. For more background on set-theoretic aspects of ultrafilters see  \cite{MR2757533}.

\medskip

\section{Iterated ultrapowers as \texorpdfstring{$p$}{Lg}-compact groups}\label{p-comp}

In this section we shall give a canonical construction of a $p$-compact group  
for every ultrafilter $p \in \omega^{*}$. This will be done by studying the iterated ultrapower construction.

\medskip

Fix a group $\mathbb{G}$ and put $\mathsf{ult}_{p}^{0}(\mathbb{G})=\mathbb{G}$. Given an ordinal $\alpha$ with $\alpha > 0$, let 
\[
\mathsf{ult}_{p}^{\alpha}(\mathbb{G})=\mathsf{ult}_{p}\left(\varinjlim_{\beta < \alpha}\mathsf{ult}_{p}^{\beta}(\mathbb{G}))\right),
\]
where $\varinjlim_{\beta < \alpha}\mathsf{ult}_{p}^{\beta}(\mathbb{G})$ denotes the direct limit of the direct system $\langle \mathsf{ult}_{p}^{\beta}(\mathbb{G}), \varphi_{\delta\beta} \colon \delta \leqslant \beta < \alpha \rangle$ with the following properties:
\begin{enumerate}
    \item $\varphi_{\delta\delta}$ is the identity function on $\mathsf{ult}_{p}^{\delta}(\mathbb{G})$, and
    \item $\varphi_{\delta\beta}\colon \mathsf{ult}_{p}^{\delta}(\mathbb{G}) \to \mathsf{ult}_{p}^{\beta}(\mathbb{G})$ is the canonical embedding of $\mathsf{ult}_{p}^{\delta}(\mathbb{G})$ into $\mathsf{ult}_{p}^{\beta}(\mathbb{G})$, defined recursively by
    $\varphi_{\delta, \alpha +1}([f])=$ the constant function with value $[f]$, and $\varphi_{\delta, \alpha }([f])=$ the direct limit of $\varphi_{\delta, \beta }([f]), \ \beta<\alpha$ for a limit ordinal $\alpha$.
\end{enumerate}

In what follows, we will abbreviate $\mathsf{ult}_{p}^{\alpha^{-}}(\mathbb{G})$ for $\varinjlim_{\beta < \alpha}\mathsf{ult}_{p}^{\beta}(\mathbb{G})$. Moreover, we will treat  $\mathsf{ult}_{p}^{\alpha^{-}}(\mathbb{G})$ as 
$\bigcup_{\beta < \alpha}\mathsf{ult}_{p}^{\beta}(\mathbb{G})$ and, in such case, we put  
$
\text{ht}(a)=\min\{\beta < \alpha \colon a \in \mathsf{ult}_{p}^{\beta}(\mathbb{G})\}
$ 
for every $a \in \mathsf{ult}_{p}^{\alpha^{-}}(\mathbb{G})$. This is, of course, formally wrong, but is facilitated by our identification of $\mathbb{G}$ with a subgroup of $\mathsf{ult}_{p}(\mathbb{G})$. In this way we can avoid talking about direct limit constructions.

\medskip

We now consider $(\mathbb{G},\tau_{\, \text{Bohr}})$. Having fixed an ultrafilter $p \in \omega^{*}$, this topology naturally \emph{lifts} to a topology on $\mathsf{ult}_{p}(\mathbb{G})$ as follows: Every $\Phi \in \text{Hom}(\mathbb{G},\mathbb{T})$ naturally extends to a homomorphism $\overline{\Phi} \in \text{Hom}(\mathsf{ult}_{p}(\mathbb{G}),\mathbb{T})$ by letting 

\begin{equation}\label{e:p-lim} 
    \overline{\Phi}([f])=p\text{ -}\lim_{n \in \omega}\Phi(f(n)).
\end{equation}

By {\L}\'os's theorem, $\overline \Phi$ is indeed a homomorphism from $\mathsf{ult}_p(\mathbb{G})$ to $\mathbb{T}$ and hence the weakest topology making every $\overline{\Phi}$ continuous, where $\Phi \in \text{Hom}(\mathbb{G},\mathbb{T})$, is a group topology on $\mathsf{ult}_p(\mathbb{G})$. This topology will be denoted by $\tau_{\, \overline{\text{Bohr}}}$.

\medskip

The following is a trivial, yet fundamental fact:

\begin{lemma}\label{p-lim} For every $f:\omega\to \mathbb{G}$, $[f]=p$-$\lim_{n \in \omega} f(n)$ in $\tau_{\, \overline{\text{Bohr}}}$.
\end{lemma}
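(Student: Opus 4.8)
The plan is to unwind the definitions of $\tau_{\,\overline{\text{Bohr}}}$ and of $p$-$\lim$, and observe that the former is, essentially by construction, the initial topology that makes each $\overline{\Phi}$ continuous, while the defining equation \eqref{e:p-lim} for $\overline{\Phi}$ says precisely that $\overline{\Phi}([f]) = p$-$\lim_{n}\Phi(f(n))$. So the statement should reduce to the fact that $p$-limits are preserved and reflected by continuous homomorphisms into $\mathbb{T}$, together with the fact that the character group of $\mathbb{G}$ separates points of $\mathsf{ult}_p(\mathbb G)$ in the relevant sense.

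In more detail, first I would fix $f\colon\omega\to\mathbb G$ and show $[f]=p$-$\lim_{n\in\omega} f(n)$ in $\tau_{\,\overline{\text{Bohr}}}$. Since a neighbourhood base at $[f]$ in this topology is given by finite intersections of sets of the form $\overline{\Phi}^{-1}(V)$ with $\Phi\in\mathrm{Hom}(\mathbb G,\mathbb T)$ and $V$ an open neighbourhood of $\overline{\Phi}([f])$ in $\mathbb T$, it suffices to check, for each such basic neighbourhood $U=\bigcap_{i<k}\overline{\Phi_i}^{-1}(V_i)$ of $[f]$, that $\{n:f(n)\in U\}\in p$. Now $f(n)\in\overline{\Phi_i}^{-1}(V_i)$ iff $\Phi_i(f(n))\in V_i$ (here I use that, under the identification of $\mathbb G$ with the constant functions, $\overline{\Phi_i}$ restricted to $\mathbb G$ is just $\Phi_i$). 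By \eqref{e:p-lim}, $\overline{\Phi_i}([f])=p$-$\lim_n\Phi_i(f(n))$, and since $V_i$ is an open neighbourhood of this $p$-limit in $\mathbb T$, the very definition of $p$-$\lim$ gives $A_i:=\{n:\Phi_i(f(n))\in V_i\}\in p$. Then $\bigcap_{i<k}A_i\in p$ as $p$ is a filter, and $\bigcap_{i<k}A_i\subseteq\{n:f(n)\in U\}$, so the latter is in $p$. This is exactly what it means for $[f]$ to be the $p$-limit of $(f(n))_n$ in $\tau_{\,\overline{\text{Bohr}}}$.

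I do not see a serious obstacle here; the lemma is genuinely "trivial, yet fundamental." The only point requiring a little care is the interplay between the identification of $\mathbb G$ with the constant functions inside $\mathsf{ult}_p(\mathbb G)$ and the extension formula \eqref{e:p-lim}: one must check that $\overline{\Phi}$ restricted to (the copy of) $\mathbb G$ agrees with $\Phi$, which is immediate since for a constant function $g$ one has $p$-$\lim_n\Phi(g)=\Phi(g)$. One should also note that it is enough to verify the neighbourhood condition on a base, and that basic open sets in an initial topology generated by a family of maps into a topological group are finite intersections of preimages of open sets — both standard. No additional hypotheses on $p$ (selectivity, $P$-point, etc.) are needed, and $\mathbb G$ can be an arbitrary group.
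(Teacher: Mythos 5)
Your proof is correct and is exactly the argument the paper compresses into one line: unwinding the initial topology generated by the maps $\overline{\Phi}$, using the defining equation \eqref{e:p-lim}, and checking that $\overline{\Phi}$ agrees with $\Phi$ on the copy of $\mathbb G$ inside $\mathsf{ult}_p(\mathbb G)$. The extra care you take with basic neighbourhoods and finite intersections is just the detail the paper leaves implicit; no difference in approach.
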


\begin{proof} This follows directly from the definition of $\overline \Phi$ and the identification of $\mathbb G$ with a subgroup of $\mathsf{ult}_p(\mathbb{G})$.
\end{proof}

The group that will be relevant for us is the group $\mathsf{ult}_{p}^{\omega_{1}}(\mathbb{G})$, endowed with the topology 
$\tau_{\, \overline{\text{Bohr}}}$ induced by the homomorphisms in $\text{Hom}(\mathbb{G},\mathbb{T})$ extended recursively all the way to  $\mathsf{ult}_p^{\omega_{1}}(\mathbb{G})$ by the same formula (\ref{e:p-lim}).

\medskip

The (iterated) ultrapower with this topology is usually not Hausdorff (see \cite{MR2207497,MR698308}), so we identify the inseparable functions and denote by $(\mathsf{Ult}_{p}^{\omega_{1}}(\mathbb{G}),\tau_{\, \overline{\text{Bohr}}})$ this quotient. More explicitly,  
\[
\mathsf{Ult}_{p}^{\omega_{1}}(\mathbb{G}) = \mathsf{ult}_{p}^{\omega_{1}}(\mathbb{G})/K,
\]
where $K=\bigcap_{\Phi \in \text{Hom}(\mathbb{G},\mathbb{T})}\text{Ker}(\overline{\Phi})$. The natural projection will be denoted by 
\[
\pi \colon \mathsf{ult}_{p}^{\omega_{1}}(\mathbb{G}) \to \mathsf{ult}_{p}^{\omega_{1}}(\mathbb{G})/K.
\]

\medskip 

The main reason for considering the iterated Fubini powers here is the following simple and crucial fact:

\begin{proposition}\label{p-compact} 
Let  $p\in\omega^*$ be an ultrafilter.
\begin{enumerate}[\upshape (1)]
\item $\mathsf{ult}_{p}^{\alpha}(\mathbb{G}) \simeq \mathsf{ult}_{p^{\alpha}}(\mathbb{G})$ for $\alpha < \omega_{1}$, and
\item $(\mathsf{Ult}_{p}^{\omega_{1}}(\mathbb{G}),\tau_{\, \overline{\text{Bohr}}})$ is a Hausdorff $p$-compact topological group.
\end{enumerate}
\end{proposition}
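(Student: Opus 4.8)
The plan is to establish (1) by a routine induction on $\alpha$ and then to derive (2), the substantive assertion, from (1) together with Lemma~\ref{p-lim} and a cofinality remark about $\omega_{1}$. For (1) I would construct, by induction on $\alpha<\omega_{1}$, a group isomorphism $\Psi_{\alpha}\colon\mathsf{ult}_{p}^{\alpha}(\mathbb{G})\to\mathsf{ult}_{p^{\alpha}}(\mathbb{G})$ (in fact, by a parallel computation using the compactness of $\mathbb{T}$, a homeomorphism for the respective lifted Bohr topologies), arranging at each stage that the maps $\Psi_{\beta}$, $\beta\leqslant\alpha$, commute with the canonical embeddings on both sides, so that they pass to direct limits. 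The case $\alpha=1$ is the identity. For a successor $\alpha+1$, the index set $\{\beta\colon\beta<\alpha+1\}$ has a largest element, whence $\varinjlim_{\beta<\alpha+1}\mathsf{ult}_{p}^{\beta}(\mathbb{G})=\mathsf{ult}_{p}^{\alpha}(\mathbb{G})$ and so $\mathsf{ult}_{p}^{\alpha+1}(\mathbb{G})=\mathsf{ult}_{p}(\mathsf{ult}_{p}^{\alpha}(\mathbb{G}))$; applying $\mathsf{ult}_{p}(-)$ to $\Psi_{\alpha}$ gives $\mathsf{ult}_{p}^{\alpha+1}(\mathbb{G})\simeq\mathsf{ult}_{p}(\mathsf{ult}_{p^{\alpha}}(\mathbb{G}))$, and the assignment $[\,n\mapsto[\,m\mapsto h(n,m)\,]\,]\mapsto[\,(n,m)\mapsto h(n,m)\,]$, under the identification of $\omega$ with $\omega\times\omega$ via $\varphi$, is --- straight from the recursive definition of $p^{\alpha+1}$ --- an isomorphism of $\mathsf{ult}_{p}(\mathsf{ult}_{p^{\alpha}}(\mathbb{G}))$ onto $\mathsf{ult}_{p^{\alpha+1}}(\mathbb{G})$. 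For limit $\alpha$ the inductive hypotheses assemble, compatibly with the embeddings, to identify $\mathsf{ult}_{p}^{\alpha^{-}}(\mathbb{G})$ with the union of the $\mathsf{ult}_{p^{\beta}}(\mathbb{G})$, $\beta<\alpha$, so that $\mathsf{ult}_{p}^{\alpha}(\mathbb{G})\simeq\mathsf{ult}_{p}\big(\bigcup_{\beta<\alpha}\mathsf{ult}_{p^{\beta}}(\mathbb{G})\big)$; the tree presentation recalled in Section~2 is exactly designed to identify the latter with $\mathsf{ult}_{p^{\alpha}}(\mathbb{G})$, since a representative of a class on the left assigns to $p$-almost every $n$ an element of some $\mathsf{ult}_{p^{\beta_{n}}}(\mathbb{G})$, i.e.\ a function on a level set $\Omega_{0}(T_{\beta_{n}})$, and gluing these along the root yields a function on $\Omega_{0}(T)$ for a tree $T\in\mathbb{L}_{p}(T_{\alpha})$; one then invokes the standard invariance of the type of $p^{\alpha}$ under the choice of the witnessing sequence $\{\alpha_{n}\}$. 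That each $\Psi_{\alpha}$ is a well-defined homomorphism is immediate from {\L}\'os's theorem.

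For (2), Hausdorffness is soft: every $\overline{\Phi}$ annihilates $K=\bigcap_{\Phi\in\text{Hom}(\mathbb{G},\mathbb{T})}\text{Ker}(\overline{\Phi})$ and hence factors through $\pi$ as a homomorphism $\widetilde{\Phi}\colon\mathsf{Ult}_{p}^{\omega_{1}}(\mathbb{G})\to\mathbb{T}$, and the quotient topology $\tau_{\,\overline{\text{Bohr}}}$ on $\mathsf{Ult}_{p}^{\omega_{1}}(\mathbb{G})$ coincides with the initial topology generated by the family $\{\widetilde{\Phi}\}$. This family separates points: if $\pi(a)\neq\pi(b)$ then $a$ and $b$ lie in distinct cosets of $K$, so $\overline{\Phi}(a)\neq\overline{\Phi}(b)$ for some $\Phi$. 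An initial topology induced by a point-separating family of maps into a Hausdorff space is Hausdorff, and being generated by homomorphisms into the topological group $\mathbb{T}$ it is a group topology; hence $(\mathsf{Ult}_{p}^{\omega_{1}}(\mathbb{G}),\tau_{\,\overline{\text{Bohr}}})$ is a Hausdorff topological group.

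For $p$-compactness the crucial observation is that, since $\omega_{1}$ has uncountable cofinality, $\mathsf{ult}_{p}^{\omega_{1}}(\mathbb{G})=\bigcup_{\beta<\omega_{1}}\mathsf{ult}_{p}^{\beta}(\mathbb{G})$: a class in $\mathsf{ult}_{p}^{\omega_{1}}(\mathbb{G})=\mathsf{ult}_{p}(\mathsf{ult}_{p}^{\omega_{1}^{-}}(\mathbb{G}))$ has a representative $f\colon\omega\to\bigcup_{\beta<\omega_{1}}\mathsf{ult}_{p}^{\beta}(\mathbb{G})$ with $\sup_{n}\text{ht}(f(n))<\omega_{1}$, hence already lives in $\mathsf{ult}_{p}^{\gamma+1}(\mathbb{G})$ for some countable $\gamma$. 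Now given $\{x_{n}\colon n\in\omega\}\subseteq\mathsf{Ult}_{p}^{\omega_{1}}(\mathbb{G})$, choose $y_{n}\in\pi^{-1}(x_{n})$; by the previous sentence there is $\gamma<\omega_{1}$ with all $y_{n}\in\mathsf{ult}_{p}^{\gamma}(\mathbb{G})$, so $n\mapsto y_{n}$ determines a class $[g]\in\mathsf{ult}_{p}^{\gamma+1}(\mathbb{G})\subseteq\mathsf{ult}_{p}^{\omega_{1}}(\mathbb{G})$. Since the extension formula (\ref{e:p-lim}) has the same form at every level, the proof of Lemma~\ref{p-lim} applies verbatim one level up and gives $[g]=p\text{-}\lim_{n\in\omega}y_{n}$ in $\tau_{\,\overline{\text{Bohr}}}$ on $\mathsf{ult}_{p}^{\gamma+1}(\mathbb{G})$, and because the extended homomorphisms restrict correctly, $\mathsf{ult}_{p}^{\gamma+1}(\mathbb{G})$ carries exactly the subspace topology from $\mathsf{ult}_{p}^{\omega_{1}}(\mathbb{G})$, so the same $p$-limit holds there. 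Finally $\pi$ is continuous and continuous maps preserve $p$-limits, so $\pi([g])=p\text{-}\lim_{n\in\omega}\pi(y_{n})=p\text{-}\lim_{n\in\omega}x_{n}$; thus $x:=\pi([g])$ is the required $p$-limit, and $(\mathsf{Ult}_{p}^{\omega_{1}}(\mathbb{G}),\tau_{\,\overline{\text{Bohr}}})$ is $p$-compact.

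I expect the one genuinely delicate point to be the limit stage of the induction in (1): one must match the rigidly fixed combinatorial data ($\varphi$, the sequences $\{\alpha_{n}\}$, the trees $T_{\alpha}$) built into $p^{\alpha}$ and $\mathbb{L}_{p}(T_{\alpha})$ against the a priori unrelated heights exhibited by an arbitrary element of the iterated ultrapower, and then check that the isomorphisms produced at distinct ordinals cohere well enough to survive passage to the direct limit --- this is precisely where the invariance of the type of $p^{\alpha}$ under the choices does the work. Granting (1), everything in (2) is routine, the only real input being the cofinality remark confining each countable sequence to a bounded level below $\omega_{1}$.
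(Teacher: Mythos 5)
Your proposal is correct, and its part (2) is essentially the paper's own argument: Hausdorffness of the quotient, reduction of $p$-compactness to $\mathsf{ult}_{p}^{\omega_{1}}(\mathbb{G})$ via continuity of $\pi$, the cofinality observation confining a countable sequence to some level $\gamma<\omega_{1}$, and the extension formula (\ref{e:p-lim}) exhibiting the class of the sequence as its $p$-limit; the paper even skips your detour through the subspace topology on $\mathsf{ult}_{p}^{\gamma+1}(\mathbb{G})$ by reading off $\overline{\Phi}([g])=p\text{-}\lim_{n}\overline{\Phi}(y_{n})$ directly at the top level. Where you genuinely diverge is part (1): you run a transfinite induction, using functoriality of $\mathsf{ult}_{p}(-)$ together with the classical Fubini-product identity $\mathsf{ult}_{p}(\mathsf{ult}_{p^{\alpha}}(\mathbb{G}))\simeq\mathsf{ult}_{p^{\alpha+1}}(\mathbb{G})$ at successor steps, and a gluing-along-the-root argument (requiring a coherent system of embeddings on the $\mathsf{ult}_{p^{\beta}}$ side and the invariance of the type of $p^{\alpha}$ under the auxiliary choices) at limit steps. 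The paper instead fixes $\alpha$ once and for all and defines, for each $[f]$, the unfolding tree $T_{f}\in\mathbb{L}_{p}(T_{\alpha})$ together with $\hat{f}$, and checks directly that the single map $\varphi([f])=[\restr{\hat{f}}{\Omega_{0}(T_{f})}]$ is an isomorphism; this sidesteps precisely the bookkeeping you identify as delicate, since no compatibility between isomorphisms at different ordinals is ever needed, and the mismatch between arbitrary heights and the fixed data $\varphi$, $\{\alpha_{n}\}$, $T_{\alpha}$ is absorbed, in both your route and the paper's, by working only up to the type of $p^{\alpha}$, which is all the statement requires. Your route buys the conceptual successor-step identity and a homeomorphism claim the paper does not state; the paper's route buys economy and, more importantly, the apparatus $T_{f}$, $\hat{f}$, which is reused verbatim in Lemma \ref{L:main}, so if you adopted your proof you would still need to set up that unfolding machinery separately.
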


\begin{proof} 
To prove (1), fix an $\alpha < \omega_{1}$. For given $[f] \in \mathsf{ult}_{p}^{\alpha}(\mathbb{G})$, recursively define a tree $T_{f} \in \mathbb{L}_{p}(T_{\alpha})$ and a function $\hat{f} \colon T_{f} \to \mathsf{ult}_{p}^{\alpha}(\mathbb{G})$ so that 

\begin{itemize}
    \item $\text{succ}_{T_{f}}(\varnothing)=\text{dom}(f_{\varnothing})$ and $\hat{f}(\varnothing)=[f_{\varnothing}]$, where $f_{\varnothing}=f$;
    \item if $\hat{f}(t)$ is defined say $\hat{f}(t)=[f_{t}]$, then $\text{succ}_{T_{f}}(t)=\text{dom}(f_{t})$ and 
    $\hat{f}(t^{\frown}n) = f_{t}(n)$ for every $n \in \text{succ}_{T_{f}}(t)$.
\end{itemize}

\medskip

We define $\varphi \colon \mathsf{ult}_{p}^{\alpha}(\mathbb{G}) \to \mathsf{ult}_{p^{\alpha}}(\mathbb{G})$ given by 

\[
\varphi([f])= [\restr{\hat{f}}{\Omega_{0}(T_{f})}].
\]

\begin{claim}
$\varphi$ is an isomorphism.
\end{claim}

\begin{proofclaim}
To see that $\varphi$ is a surjection, let $[f] \in \mathsf{ult}_{p^{\alpha}}(\mathbb{G})$ be such that $\text{dom}(f) = \Omega_{0}(T_{f})$ for some $T_{f} \in \mathbb{L}_{p}(T_{\alpha})$. Consider the function $\check{f} \colon T_{f} \to \mathsf{ult}_{p}^{\alpha}(\mathbb{G})$ defined recursively by 

\begin{itemize}
    \item $\restr{\check{f}}{\Omega_{0}(T_{f})}=f$ and, 
    \item if $ t \in T_{\alpha}^{+}$, then $\check{f}(t)=[\langle \check{f}(t^{\frown}n) \colon n \in \text{succ}_{T_{f}}(t)\rangle]$.
\end{itemize}

\medskip

Notice that the function $\check{f}$ satisfies that $\check{f}(t) \in \mathsf{ult}_{p}^{\rho_{T_{f}(t)}}(\mathbb{G})$ for every $t \in T_{f}$. In particular, $\check{f} (\varnothing) \in \mathsf{ult}_{p}^{\alpha}(\mathbb{G})$ and, a routine calculation shows that $\varphi(\check{f} (\varnothing)) = [f]$.

\medskip 

To see that $\varphi$ is injective, suppose that $\varphi([f])=\varphi([g])$. Then there exists a tree $T \in \mathbb{L}_{p}(T_{\alpha})$ such that 
\[
\restr{\hat{f}}{\Omega_{0}(T)} = \restr{\hat{g}}{\Omega_{0}(T)}.
\]
If set $h := \restr{\hat{f}}{\Omega_{0}(T)}$, then we can verify recursively that $\check{h}(\varnothing)=[f]=[g]$. Therefore, $\varphi$ is a one-to-one function.

\medskip

Finally, using again a recursive argument, one can check that $\varphi$ preserves the group structure.
\end{proofclaim}

\medskip

To prove (2) note that by definition $\mathsf{Ult}_{p}^{\omega_{1}}(\mathbb{G})$ is a Hausdorff topological group. To see that $\mathsf{Ult}_{p}^{\omega_{1}}(\mathbb{G})$ is $p$-compact, since $\mathsf{Ult}_{p}^{\omega_{1}}(\mathbb{G})$ is a continuous image of $\mathsf{ult}_{p}^{\omega_{1}}(\mathbb{G})$, it suffices to check that $\mathsf{ult}_{p}^{\omega_{1}}(\mathbb{G})$ is $p$-compact. Let 
$f \colon \omega \to \mathsf{ult}_{p}^{\omega_{1}}(\mathbb{G})$ be a sequence and let $n \in \omega$. So $f(n) \in \mathsf{ult}_{p}(\mathsf{ult}_{p}^{\omega_{1}^{-}}(\mathbb{G}))$, that is, there exists 
$f_{n} \colon \omega \to \bigcup_{\alpha < \omega_{1}}\mathsf{ult}_{p}^{\alpha}(\mathbb{G})$ such that $f(n)=[f_{n}]$. Thus, for every $n \in \omega$ there exists $\alpha_{n} < \omega_{1}$ such that $f(n) \in \mathsf{ult}_{p}^{\alpha_{n}}(\mathbb{G})$ and hence 
$[f] \in \mathsf{ult}_{p}^{\alpha}(\mathbb{G})$ for $\alpha =\sup \{\alpha_n: n\in \omega\} < \omega_{1}$. Then $[f]=p$-$\lim_{n \in \omega} f(n)$ in $\tau_{\, \overline{\text{Bohr}}}$ as by the construction
$\overline{\Phi}([f])=p$-$\lim \overline \Phi(f(n))$ for every $\Phi \in \text{Hom}(\mathbb{G},\mathbb{T})$.
This gives us the $p$-compactness of $\mathsf{ult}_{p}^{\omega_{1}}(\mathbb{G})$. 
\end{proof}

The plan for our construction is as follows: fix an ultrafilter $p \in \omega^{*}$, find a suitable topological group $\mathbb{G}$ without convergent sequences and consider 
$(\mathsf{Ult}_{p}^{\omega_{1}}(\mathbb{G}),\tau_{\, \overline{\text{Bohr}}})$. The remaining issue is: Does $(\mathsf{Ult}_{p}^{\omega_{1}}(\mathbb{G}),\tau_{\, \overline{\text{Bohr}}})$ have non-trivial convergent sequences?

\medskip

While our approach is applicable to an arbitrary group $\mathbb{G}$, in the remainder of this paper we will be dealing exclusively with \emph{Boolean} groups, \emph{i.e.,} groups where each element is its own inverse.\footnote{The general case will be dealt with in a separate paper.} These groups are, in every infinite cardinality $\kappa$,  isomorphic to the group $[\kappa]^{<\omega}$ with the symmetric difference $\triangle$ as the group operation and $\varnothing$ as the neutral element. Every Boolean group is a vector space over the trivial $2$-element field which we identify
with $2=\{0,1\}$. Hence, we can talk, \emph{e.g.,} about \emph{linearly independent} subsets of a Boolean group. Also, since every homomorphism from a Boolean group into the torus $\mathbb T$ takes at most two values (in the unique subgroup of $\mathbb T$ of size $2$) we may and will identify $\text{Hom}([\omega]^{<\omega}, \mathbb T)$ with
$\text{Hom}([\omega]^{<\omega}, 2)$ to highlight the fact that there are only two possible values. Hence also $\text{Hom}([\omega]^{<\omega}, 2)$ is a Boolean group and a vector space over the same field.

\medskip

The following theorem is the main result of this section.

\begin{theorem}\label{Th:SelectiveUltrafilter}
 Let $p \in \omega^{*}$ be a selective ultrafilter. Then 
 $(\mathsf{Ult}_{p}^{\omega_{1}}([\omega]^{<\omega}),\tau_{\, \overline{\text{Bohr}}})$ is a Hausdorff $p$-compact topological Boolean group without non-trivial convergent sequences.
\end{theorem}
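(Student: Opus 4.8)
The plan is to exploit Proposition~\ref{p-compact}: we already know that $(\mathsf{Ult}_{p}^{\omega_{1}}([\omega]^{<\omega}),\tau_{\, \overline{\text{Bohr}}})$ is a Hausdorff $p$-compact Boolean group, so the only thing left to verify is the absence of non-trivial convergent sequences. Suppose toward a contradiction that $\langle a_n : n \in \omega \rangle$ is a non-trivial sequence in $\mathsf{Ult}_{p}^{\omega_{1}}([\omega]^{<\omega})$ converging to some point $a$; translating by $a$ and passing to the Boolean structure, we may assume $a = \varnothing$ (the neutral element) and that the $a_n$ are distinct and non-zero. Lift each $a_n$ to a representative $\tilde a_n \in \mathsf{ult}_{p}^{\omega_1}([\omega]^{<\omega})$; since $\omega_1$ is regular, all the lifts live below some fixed countable level, i.e. inside $\mathsf{ult}_{p}^{\alpha}([\omega]^{<\omega}) \simeq \mathsf{ult}_{p^{\alpha}}([\omega]^{<\omega})$ for a single $\alpha < \omega_1$. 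So the problem reduces to a statement about the single ultrapower $\mathsf{ult}_{q}([\omega]^{<\omega})$ with $q = p^{\alpha}$, equipped with $\tau_{\, \overline{\text{Bohr}}}$, modulo the kernel $K$: a sequence converging to $\varnothing$ there must be eventually zero.

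The heart of the matter is therefore: \emph{if $q$ is an iterated Fubini power of a selective ultrafilter, does $(\mathsf{ult}_{q}([\omega]^{<\omega}),\tau_{\, \overline{\text{Bohr}}})$ have non-trivial convergent sequences modulo $K$?} To attack this I would unwind what convergence to $\varnothing$ means dually: $a_n \to \varnothing$ iff for every $\Phi \in \text{Hom}([\omega]^{<\omega},2)$, we have $\overline\Phi(a_n) = 0$ for all but finitely many $n$ — equivalently, the set $F_\Phi = \{n : \overline\Phi(a_n) = 1\}$ is finite for every character $\Phi$. Representing $a_n = [g_n]$ with $g_n : \omega \to [\omega]^{<\omega}$, the value $\overline\Phi(a_n) = q\text{-}\lim_m \Phi(g_n(m))$ is computed coordinatewise. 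The combinatorial task is to show that the only way a linearly independent (modulo $K$) family $\{[g_n] : n \in \omega\}$ can have \emph{every} character vanish on a cofinite subsequence is if the family is trivial — using the selectivity of $p$ (and the consequent nice structure of $p^\alpha$ via the tree representation $\mathbb{L}_p(T_\alpha)$) to build, by a diagonalization against the tree, a single character $\Phi$ that is $1$ on infinitely many $a_n$. Concretely, I would try to choose at each node of some $T \in \mathbb{L}_p(T_\alpha)$ a finite piece of $\Phi$ (a linear functional on the relevant finite-dimensional piece of $[\omega]^{<\omega}$) so as to "catch" infinitely many of the $g_n$, the $P$-point property giving the freedom to thin successor sets and the $Q$-point property making the branching finite-to-one so the local choices can be amalgamated into one global homomorphism.

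The main obstacle I anticipate is exactly this diagonalization: arranging the local character-choices along the tree coherently so that they glue to a genuine element of $\text{Hom}([\omega]^{<\omega},2)$ while simultaneously defeating infinitely many of the $a_n$. The difficulty is that the supports of the $g_n(m)$ range over all of $[\omega]^{<\omega}$ and there is no a priori bound on them, so one must first normalize — perhaps by a pressing-down / $\Delta$-system style argument on the tree representatives to assume the supports have a predictable shape (e.g. a root-plus-free-part decomposition along branches of $T_\alpha$) — before the selective ultrafilter can be used to prune. I would also need the fact, implicit in the passage to $\mathsf{Ult}$, that $K$ does not accidentally collapse our purported non-trivial sequence, i.e. that linear independence modulo $K$ is what one should really be tracking; this should follow from the observation that characters separate points of the quotient by construction. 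If the direct tree-diagonalization proves too delicate, an alternative is an inductive argument on $\alpha$: show the base case $q = p$ (selective) by a more hands-on argument, and then propagate through successor and limit stages of the Fubini iteration using the recursive definition of $p^{\alpha}$ together with a Fubini-type interchange of the "catching" construction — but I expect the paper's own proof to be closer to the tree picture it has carefully set up.
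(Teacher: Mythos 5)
Your reduction to a fixed level $\alpha<\omega_1$ and the dual reformulation of convergence are fine, but the core of your plan --- building, by a node-by-node diagonalization along a tree in $\mathbb{L}_p(T_\alpha)$, a single character $\Phi$ with $\overline\Phi(a_n)=1$ for infinitely many $n$ --- is exactly the part you leave open, and it is not how the argument needs to go. The missing idea is to stop treating the terms $a_n$ separately: since $\mathsf{ult}_{p}^{\omega_{1}}([\omega]^{<\omega})=\mathsf{ult}_p\bigl(\bigcup_{\alpha<\omega_1}\mathsf{ult}_p^{\alpha}([\omega]^{<\omega})\bigr)$, the lifted sequence $f$ is itself a representative of a single element $[f]\in\mathsf{ult}_p^{\alpha}([\omega]^{<\omega})$ for some $\alpha<\omega_1$, and by Lemma \ref{p-lim} one has $[f]=p\text{-}\lim_{n}f(n)$ in $\tau_{\,\overline{\text{Bohr}}}$. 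Hence it suffices to produce one character $\Phi$ with $\overline\Phi([f])=1$: this alone forces $\overline\Phi(f(n))=1$ for a set of $n$ in $p$, hence for infinitely many $n$, killing convergence to $\varnothing$. Your plan to ``catch infinitely many $a_n$'' by amalgamating local character choices (after a $\Delta$-system normalization of supports) attacks infinitely many constraints simultaneously and is precisely the delicate gluing you admit you cannot yet carry out; the $p$-limit identity makes that diagonalization unnecessary.

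What actually carries the weight of the theorem, and is absent from your proposal, is the paper's Lemma \ref{L:main}: for selective $p$ and $[f]\in\mathsf{ult}_p^{\alpha}([\omega]^{<\omega})$ with $f$ not constant on a set in $p$, there is $T\in\mathbb{L}_p(T_\alpha)$ with $T\subseteq T_f$ such that $\hat f\restriction\Omega_0(T)$ is a linearly independent subset of the ground group $[\omega]^{<\omega}$. Once you have that, any $\Phi\in\text{Hom}([\omega]^{<\omega},2)$ sending $\hat f[\Omega_0(T)]$ to $1$ works, since $\overline\Phi([f])$ is computed as an iterated $p$-limit along $T$. Proving Lemma \ref{L:main} is where selectivity genuinely enters (through Proposition \ref{selective}(2)(b),(c), applied level by level to the quotients $\mathsf{ult}_p^{\beta}([\omega]^{<\omega})/\mathsf{ult}_p^{\beta^-}([\omega]^{<\omega})$), and the coherence problem you foresee is resolved there not by a $\Delta$-system argument but by a recursion along a canonical enumeration of the tree, an elementary submodel $M$ with a pseudo-intersection $U$ of $p\cap M$, and the finite linear-algebra Fact \ref{F:linearly-independent} to repair finite overlaps. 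Note also that your intermediate move to ``work in $\mathsf{ult}_q$ with $q=p^\alpha$'' discards selectivity ($p^\alpha$ is not a P-point for $\alpha\ge 2$), so any successful argument must, as the paper's does, exploit the iterated tree structure of $p^\alpha$ rather than properties of $q$ itself. As it stands, your proposal identifies the right objects but leaves the two essential steps --- the single-element/$p$-limit reduction and Lemma \ref{L:main} --- unproved.
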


In order to prove this theorem, we apply the first step of our plan.

\begin{proposition}\label{p:ground-group}
The group $[\omega]^{<\omega}$ endowed with the topology $\tau_{\, \text{Bohr}}$ is a non-discrete Hausdorff topological group without non-trivial convergent sequences.  
\end{proposition}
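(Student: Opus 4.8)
The plan is to analyze the Bohr topology on $[\omega]^{<\omega}$ directly in terms of the homomorphisms into $2$, using the identification of $\mathrm{Hom}([\omega]^{<\omega},2)$ with $2^\omega$. Concretely, a homomorphism $\Phi\colon[\omega]^{<\omega}\to 2$ is determined by its values $\Phi(\{n\})$ for $n\in\omega$, so $\mathrm{Hom}([\omega]^{<\omega},2)$ is naturally in bijection with $2^\omega$, and for $x\in 2^\omega$ the corresponding homomorphism is $a\mapsto \sum_{n\in a}x(n)\pmod 2$. A basic neighborhood of $\varnothing$ in $\tau_{\,\text{Bohr}}$ is then of the form $\{a\in[\omega]^{<\omega}:\Phi_1(a)=\dots=\Phi_k(a)=0\}$ for finitely many $\Phi_i$; since the trivial field has two elements, this is exactly the set of $a$ lying in the intersection of finitely many hyperplanes, i.e. the annihilator of a finite-dimensional subspace of $2^\omega$.

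First I would prove non-discreteness: any basic neighborhood of $\varnothing$ is the kernel of a homomorphism $[\omega]^{<\omega}\to 2^k$, which is a finite-index subgroup, hence infinite (indeed of finite codimension as a vector space), so it is never $\{\varnothing\}$; thus no point is isolated. For Hausdorffness it suffices to separate $\varnothing$ from any $a\neq\varnothing$: pick $n\in a$ and take $x\in 2^\omega$ with $x(n)=1$ and $x(m)=0$ for $m\in a\setminus\{n\}$; the associated $\Phi$ satisfies $\Phi(a)=1$, so $\{b:\Phi(b)=0\}$ is a neighborhood of $\varnothing$ missing $a$. (Equivalently, this is the standard fact that the Bohr topology of any maximally almost periodic group is Hausdorff, and $[\omega]^{<\omega}$ embeds in the compact group $2^\omega$ via $n\mapsto$ the $n$-th coordinate character, separating points.)

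The main point is the absence of non-trivial convergent sequences. Suppose $\{a_j:j\in\omega\}$ is a sequence of distinct elements of $[\omega]^{<\omega}$ converging to some $a$; replacing $a_j$ by $a_j\triangle a$ we may assume $a=\varnothing$, so we have infinitely many distinct nonempty finite sets $a_j\to\varnothing$. I would first pass to a subsequence which is a $\Delta$-system with root $r$, and then replace $a_j$ by $a_j\triangle r$ (the translate by $r$ also converges to $\varnothing$ once we note $r$ is fixed; if the root is nonempty one argues on the translated sequence) to reduce to the case of infinitely many pairwise disjoint nonempty finite sets $b_j$ converging to $\varnothing$. Now I exhibit a single character separating infinitely many $b_j$ from $\varnothing$: choose $n_j\in b_j$ and define $x\in 2^\omega$ by $x(n_j)=1$ for all $j$ and $x(m)=0$ otherwise; the corresponding $\Phi$ has $\Phi(b_j)=1$ for every $j$, so the neighborhood $\{c:\Phi(c)=0\}$ of $\varnothing$ contains no $b_j$ at all, contradicting convergence. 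The only subtlety — and the step I expect to require the most care — is the reduction to pairwise disjoint sets: convergence to $\varnothing$ is preserved under translation by a fixed element, and a $\Delta$-system refinement of a convergent sequence is still convergent, so after translating by the (fixed, finite) root we land on a disjoint convergent-to-$\varnothing$ sequence, which the character argument kills. I would write this reduction out carefully to make sure the translation and the refinement are applied in the right order and that distinctness of the $b_j$ is retained.
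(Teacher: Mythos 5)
Your treatment of non-discreteness and Hausdorffness is fine and matches what the paper simply cites as well known, and your overall strategy for the main part (exhibit a single character that equals $1$ infinitely often along the sequence while vanishing at the limit) is sound. The genuine gap is the step ``pass to a subsequence which is a $\Delta$-system with root $r$.'' For a countably infinite family of finite sets this is \emph{not} a valid unconditional reduction: the $\Delta$-system lemma needs either uncountably many sets or a uniform bound on their cardinalities. For example, the sequence $a_j=\{0,1,\dots,j\}$ consists of distinct finite sets with no infinite $\Delta$-subsystem at all (any two members intersect in the smaller one, so a common root would force all but one member to equal $r$). So as written, the reduction to pairwise disjoint $b_j$'s can fail, and with it the choice of the points $n_j$ that your character is supported on. The step can be repaired, but only by using the assumed convergence \emph{before} the combinatorics: if $a_j\to\varnothing$, then testing against each coordinate character $e_m$ (i.e.\ $x=$ the indicator of $\{m\}$) shows every $m\in\omega$ lies in only finitely many $a_j$; from this point-finiteness one extracts recursively a subsequence of pairwise disjoint nonempty sets, and then your character argument goes through. (Separately, note the small slip that the translate $a_j\triangle r$ converges to $r$, not to $\varnothing$; this is harmless since the character you build vanishes on $r$, but it should be stated correctly.)

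For comparison, the paper's proof sidesteps all of this combinatorics by linear algebra over the two-element field: since $\operatorname{rng}(f)$ is infinite, it contains an infinite linearly independent set $A$; split $A$ into two infinite pieces $A_0,A_1$ and choose $\Phi\in\operatorname{Hom}([\omega]^{<\omega},2)$ with $A_i\subseteq\Phi^{-1}(i)$. Then $\Phi\circ f$ takes both values infinitely often, so $f$ cannot converge anywhere. This avoids both the translation bookkeeping and any extraction of disjoint supports, and it is the cleaner route; if you prefer your character-with-disjoint-supports picture, insert the point-finiteness argument above in place of the $\Delta$-system appeal.
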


\begin{proof}
It is well-known and easy to see that $\tau_{\, \text{Bohr}}$ is a non-discrete Hausdorff group topology (\emph{e.g.,} see \cite{MR2433295} Section 9.9). To see that $\tau_{\, \text{Bohr}}$ has no non-trivial convergent sequences, assume that $f \colon \omega \to [\omega]^{<\omega}$ is a non-trivial sequence. Then $\text{rng}(f)$ is an infinite set.  Find an infinite linearly independent set $A \subseteq \text{rng}(f)$ and split it into two infinite pieces $A_{0}$ and $A_{1}$, and take 
$\Phi \in \text{Hom}([\omega]^{<\omega},2)$ such that $A_{i} \subseteq \Phi^{-1}(i)$ for every $i < 2$. Therefore, $\Phi$ is a witness that the sequence $f$ does not converge.
\end{proof}

We say that a sequence $\langle [f_{n}] \colon n \in \omega \rangle \subset \mathsf{ult}_{p}([\omega]^{<\omega})$ is \emph{$p$-separated} if for every $n \ne m \in \omega$ there is a $\Phi \in \text{Hom}([\omega]^{<\omega},2)$ such that $\overline{\Phi}([f_{n}]) \ne \overline{\Phi}([f_{m}])$. In other words, a sequence 
$\langle [f_{n}] \colon n \in \omega \rangle \subset \mathsf{ult}_{p}([\omega]^{<\omega})$ is $p$-separated if 
and only if its elements represent distinct elements of 
$$\mathsf{Ult}_{p}([\omega]^{<\omega})=\mathsf{ult}_{p}([\omega]^{<\omega})/K$$
where $K=\bigcap_{\Phi\in \text{Hom}([\omega]^{<\omega},2)} \text{Ker}(\overline{\Phi})$ and $\pi:\mathsf{ult}_{p}([\omega]^{<\omega})\to \mathsf{Ult}_{p}([\omega]^{<\omega}) $ is the corresponding projection.
\medskip

We next show that, in general, the plan does not work for all $p \in \omega^{*}$.

\begin{lemma}\label{L:ConvergentSequences}
 The following are equivalent:
 \begin{enumerate}[\upshape (1)]
     \item There exists a $p \in \omega^{*}$ such that $(\mathsf{Ult}_{p}([\omega]^{<\omega}),\tau_{\, \overline{\text{Bohr}}})$ has non-trivial convergent sequences.
     \item There exist a sequence $\langle \Phi_{n} \colon n \in \omega \rangle \subset 
 \text{Hom}([\omega]^{<\omega},2)$ and a mapping \\ $H \colon\text{Hom}([\omega]^{<\omega},2)\to \omega$ such that for every $n \in \omega$ the family 
 $$\{[\omega]^{<\omega}\setminus \text{Ker}(\Phi_{n})\} \cup \{\text{Ker}(\Phi)\colon \ H(\Phi) \leqslant n\}$$ 
 is centered.
 \end{enumerate}
\end{lemma}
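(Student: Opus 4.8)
Throughout write $G=[\omega]^{<\omega}$ and let $\mathbf 0$ denote the neutral element of $\mathsf{Ult}_p(G)$. Since $\tau_{\, \overline{\text{Bohr}}}$ is the weak topology generated by the two‑valued characters $\overline\Phi$, a sequence $\langle\pi([f_n])\rangle$ converges to $\mathbf 0$ exactly when $\{n:\overline\Phi([f_n])=1\}$ is finite for every $\Phi\in\text{Hom}(G,2)$, and $\pi([f_n])\ne\mathbf 0$ exactly when $\overline\Phi([f_n])=1$ for some $\Phi$. Because $\mathsf{Ult}_p(G)$ is a Hausdorff topological group and $G$ is Boolean, I would first reduce (1) — by replacing a convergent sequence with its coordinatewise differences with the limit and passing to a subsequence — to the statement: there are $p\in\omega^*$ and $\langle[f_n]:n\in\omega\rangle\subset\mathsf{ult}_p(G)$ such that $\overline\Phi([f_n])=1$ for some $\Phi$ (for every $n$) while $\{n:\overline\Phi([f_n])=1\}$ is finite (for every $\Phi$). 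The task is then to show this reformulation is equivalent to (2).

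For $(1)\Rightarrow(2)$ I would just read the data off such a sequence: pick $\Phi_n$ with $\overline{\Phi_n}([f_n])=1$, and set $H(\Phi)=1+\max\{n:\overline\Phi([f_n])=1\}$, with $H(\Phi)=0$ when $\overline\Phi([f_n])=0$ for all $n$; this is well defined by the finiteness hypothesis. Fix $n$: if $H(\Phi)\le n$ then $\overline\Phi([f_n])=0$, that is, $\{k:f_n(k)\in\text{Ker}(\Phi)\}\in p$, and moreover $\{k:f_n(k)\notin\text{Ker}(\Phi_n)\}\in p$. A finite intersection of such $p$‑sets is nonempty, and any $k$ in it witnesses that the corresponding finite subfamily of $\{G\setminus\text{Ker}(\Phi_n)\}\cup\{\text{Ker}(\Phi):H(\Phi)\le n\}$ has nonempty intersection; hence that family is centered.

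For $(2)\Rightarrow(1)$, the substantial direction, I would realize $p$ as an ultrafilter on a carefully chosen \emph{countable} set rather than on $\omega$ directly. Let $C\subseteq G^\omega$ be the set of all sequences $x$ that are eventually $\varnothing$ and satisfy $x_m\in G\setminus\text{Ker}(\Phi_m)$ whenever $x_m\ne\varnothing$; since $G$ is countable, $C$ is countable (and infinite). On $C$ consider
$$\mathcal G=\bigl\{\{x\in C:x_n\notin\text{Ker}(\Phi_n)\}:n\in\omega\bigr\}\cup\bigl\{\{x\in C:x_m\in\text{Ker}(\Phi)\}:\Phi\in\text{Hom}(G,2),\ H(\Phi)\le m\bigr\}.$$
The key step is that $\mathcal G$ is centered: given a finite subfamily, let $F$ be the finite set of coordinates it mentions; for each $m\in F$, every $\Phi$ occurring at coordinate $m$ has $H(\Phi)\le m$, so by centeredness of $\{G\setminus\text{Ker}(\Phi_m)\}\cup\{\text{Ker}(\Phi):H(\Phi)\le m\}$ I can choose $s(m)\in G\setminus\text{Ker}(\Phi_m)$ lying in all those kernels, and the state with support $F$ and values $s$ lies in $C$ and in every set of the subfamily (outside $F$ it is $\varnothing$, which belongs to every kernel). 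Also no single $x\in C$ lies in all of $\{x:x_n\notin\text{Ker}(\Phi_n)\}$, $n\in\omega$ (an eventually‑$\varnothing$ sequence has some coordinate $\varnothing\in\text{Ker}(\Phi_n)$), so any ultrafilter $q$ on $C$ extending $\mathcal G$ is nonprincipal. Transporting $q$ to $p\in\omega^*$ through a bijection $C\cong\omega$ and letting $f_m\colon\omega\to G$ be the $m$‑th coordinate map, one checks $\overline{\Phi_m}([f_m])=1$ for every $m$ and $\overline\Phi([f_m])=0$ for every $m\ge H(\Phi)$, so $\langle\pi([f_m])\rangle$ is a non‑trivial sequence converging to $\mathbf 0$.

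\textbf{Main obstacle.} The difficulty is entirely in $(2)\Rightarrow(1)$. Since $\text{Hom}(G,2)$ and the fibres of $H$ may be uncountable, one cannot enumerate the required constraints, and extending each $\mathcal C_n:=\{G\setminus\text{Ker}(\Phi_n)\}\cup\{\text{Ker}(\Phi):H(\Phi)\le n\}$ independently to an ultrafilter on $G$ produces no single $p$ handling all $n$ at once. What makes the argument go through is locating $p$ on the countable space of finitely supported sequences in $G$: because $\varnothing$ lies in every kernel, all the ``$x_m\in\text{Ker}(\Phi)$'' constraints are discharged for free on the cofinitely many coordinates where the chosen state is $\varnothing$, leaving only finitely many genuine constraints at a time — precisely the situation in which the centeredness of the $\mathcal C_n$'s can be invoked.
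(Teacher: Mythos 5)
Your proposal is correct and follows essentially the paper's own proof: the $(1)\Rightarrow(2)$ direction is the same reading-off of the $\Phi_n$'s and of $H$ from a sequence converging to the identity, and your $(2)\Rightarrow(1)$ construction on the countable set $C$ of finitely supported states is the same device as the paper's ``universal'' sequence $\langle f_n\rangle$ realizing every finite assignment, with centeredness of the families in (2) supplying a witness for each finite constraint pattern. The only real difference is bookkeeping: by building the requirement $x_m\notin\text{Ker}(\Phi_m)$ into $C$ you get nonprincipality of the ultrafilter for free, whereas the paper proves freeness of its filter by invoking that $([\omega]^{<\omega},\tau_{\,\text{Bohr}})$ has no non-trivial convergent sequences, and it finishes by passing to a $p$-separated subsequence --- a step your Hausdorffness observation makes routine but which you should state explicitly to obtain a genuinely non-trivial convergent sequence.
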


\begin{proof}
 Let us prove (1) implies (2). Let $\tilde{f} \colon \omega \to \mathsf{Ult}_{p}([\omega]^{<\omega})$ be a non-trivial sequence, say $\tilde{f}(n)=\pi(f(n))$ ($n \in \omega$) where $f \colon \omega \to  \mathsf{Ult}_{p}([\omega]^{<\omega})$. Without loss of generality we can assume that $\tilde{f}$ is a one-to-one function converging to $\pi([\langle \varnothing\rangle])$, here $\langle \varnothing\rangle$ denotes the constant sequence where each term is $\varnothing$. So $\langle [f_{n}] \colon n \in \omega \rangle$ is a $p$-separated sequence $\tau_{\, \overline{\text{Bohr}}}$-converging to $[\langle \varnothing\rangle]$, where $[f_{n}]=f(n)$ for $n \in \omega$. By taking a subsequence if necessary, we may assume that for every $n \in \omega$ there is a 
 $\Phi_{n} \in \text{Hom}([\omega]^{<\omega},2)$ such that 
 $\overline{\Phi}_{n}([f_{n}])=1$. Now, by $\tau_{\, \overline{\text{Bohr}}}$-convergence of $\langle [f_{n}] \colon n \in \omega \rangle$, there is a mapping $H \colon \text{Hom}([\omega]^{<\omega},2) \to \omega$ such that for each $\Phi \in \text{Hom}([\omega]^{<\omega},2)$ and each $n \geqslant H(\Phi)$ it follows that $\overline{\Phi}([f_{n}])=0$. Now we will check that for every $n \in \omega$ the family $\{\text{Ker}(\Phi_{n})^{c}\} \cup \{\text{Ker}(\Phi)\colon$ $H(\Phi) \leqslant n\}$ is centered.\footnote{For a subset $A$ of the group $[\omega]^{<\omega}$, $A^{c}=[\omega]^{<\omega}\setminus A$.} For this, since $([\omega]^{<\omega},\tau_{\, \text{Bohr}})$ is without non-trivial convergent sequences and $[f_{n}] \xrightarrow{\tau_{\, \overline{\text{Bohr}}}} [\langle \varnothing\rangle]$, we may assume that 
 $[f_{n}]\ne [\langle a\rangle]$ for every $\langle n,a \rangle \in \omega \times [\omega]^{<\omega}$, that is, $f_{n}[U]$ is infinite for all 
 $\langle n,U\rangle \in \omega \times p$. Now, fix $n \in \omega$ and let $F \subset \text{Hom}([\omega]^{<\omega},2)$ be a finite set such that $H(\Phi) \leqslant n$ for every $\Phi \in F$. Then $\overline{\Phi}([f_{n}])=0$ for every $\Phi \in F$ and hence there exists $U_{F} \in p$ such that $\Phi(f_{n}(k))=0$ for every 
 $\langle k,\Phi\rangle \in U_{F}\times F$. Since $\overline{\Phi}_{n}([f_{n}])=1$, there exists $U_{n} \in p$ such that $\Phi_{n}(f_{n}(k))=1$ for every $k \in U_{n}$. Put $U=U_{F}\cap U_{n} \in p$. Then 
 $f_{n}[U] \subset \text{Ker}(\Phi_{n})^{c} \cap \bigcap_{\Phi \in F}\text{Ker}(\Phi)$, so we are done.

\medskip

 To prove (2) implies (1), first we observe that there is a sequence $\langle f_{n} \colon n \in \omega \rangle \subset ([\omega]^{<\omega})^{\omega}$ such that for each $F \in [\omega]^{<\omega}$ and every $\sigma \colon F \to [\omega]^{<\omega}$ there exists $k \in \omega$ such that $f_{i}(k) = \sigma(i)$ for all $i \in F$. Now, define $A_{\Phi,n}^{0}=\{k \in \omega \colon \Phi(f_{n}(k))=0\}$ and $A_{\Phi,n}^{1}=\{k \in \omega \colon \Phi(f_{n}(k))=1\}$ for all $(\Phi,n) \in \text{Hom}([\omega]^{<\omega},2)\times \omega$. 

\medskip

Fix  $\langle \Phi_{n} \colon n \in \omega \rangle \subset 
 \text{Hom}([\omega]^{<\omega},2)$ and  $H \colon\text{Hom}([\omega]^{<\omega},2)\to \omega$ as in (2).

 \begin{claim}\label{Claim:CenteredFamily}
 The collection $\bigcup_{n \in \omega}\{A_{\Phi_{n},n}^{1}\} \cup \{A_{\Phi,n}^{0} \colon H(\Phi)\leqslant n\}$ forms a centered family which generates a free filter $\mathcal{F}$.
\end{claim}

\begin{proofclaim}
 To show that such family is centered, let $m >0$ and for every $i<m$ fix a finite set $\{\Phi^{j} \colon j < m_{i}\} \subset H^{-1}[i+1]$. Then, considering all choice functions 
 \[
   \sigma \colon n \to \bigcup_{i<m}\left(\text{Ker}(\Phi_{i})^{c} \cap \bigcap_{j<m_{i}}  
   \text{Ker}(\Phi^{j})\right), 
 \]
 we can ensure that  
 \[
   \bigcap_{i < m}\left( A_{\Phi_{i},i}^{1} \cap \bigcap_{j < m_{i}}A_{\Phi^{j},i}^{0}\right)
 \]
 is an infinite set.
 
 To see that the filter $\mathcal{F}$ is free, let $k \in \omega$. If there is an $n \in \omega$ such that $f_{n}(k)=\varnothing$, then $k \notin A_{\Phi_{n}, n}^{1} \in \mathcal{F}$. In another case, since $\langle f_{n}(k) \colon n \in \omega\rangle$ does not $\tau_{\, \text{Bohr}}$-converge to $\varnothing$, there exists $\Phi \in \text{Hom}([\omega]^{<\omega},2)$ such that 
 $\Phi(f_{n}(k))=1$ for infinitely many $n$. Then pick one of such $n$ with $H(\Phi)\leqslant n$ and, $k \notin A_{\Phi,n}^{0} \in \mathcal{F}$.
\end{proofclaim}

\medskip

Let $p \in \omega^{*}$ extend $\mathcal{F}$. By Claim \ref{Claim:CenteredFamily}, it follows that 
\begin{enumerate}[(i)]
\item $\overline{\Phi}_{n}([f_{n}])=1$, for every $n \in \omega$.
\item The sequence $\langle \overline{\Phi}([f_{n}]) \colon n \in \omega\rangle$ converges to $0$, for every $\Phi \in \text{Hom}([\omega]^{<\omega},2)$, \emph{i.e.,} $\langle [f_{n}] \colon n \in \omega\rangle$ is a $\tau_{\, \overline{\text{Bohr}}}$-convergent sequence to $[\langle \varnothing\rangle]$.
\end{enumerate}
Finally, taking a subsequence if necessary, we can assume that 
$\langle [f_{n}] \colon n \in \omega\rangle$ is $p$-separated and, hence $\langle \pi([f_{n}]) \colon n \in \omega\rangle$ is a non-trivial convergent sequence in 
$(\mathsf{Ult}_{p}([\omega]^{<\omega}),\tau_{\, \overline{\text{Bohr}}})$.
\end{proof}

\medskip

\begin{remark}
Note that the filter $\mathcal{F}$ is actually an $F_{\sigma}$-filter.
\end{remark}

\medskip

\begin{theorem}
There exists a $p \in \omega^{*}$ such that $(\mathsf{Ult}_{p}([\omega]^{<\omega}),\tau_{\, \overline{\text{Bohr}}})$ has non-trivial convergent sequences.
\end{theorem}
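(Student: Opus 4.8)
The statement follows immediately from Lemma~\ref{L:ConvergentSequences}: it suffices to exhibit a sequence $\langle \Phi_{n} \colon n \in \omega \rangle \subset \text{Hom}([\omega]^{<\omega},2)$ together with a map $H \colon \text{Hom}([\omega]^{<\omega},2) \to \omega$ so that for every $n$ the family
\[
\{[\omega]^{<\omega}\setminus \text{Ker}(\Phi_{n})\} \cup \{\text{Ker}(\Phi) \colon H(\Phi) \leqslant n\}
\]
is centered. The natural choice is to identify $[\omega]^{<\omega}$ with the Boolean group generated by a fixed basis $\{e_{k} \colon k \in \omega\}$, let $\Phi_{n}$ be the coordinate functional dual to $e_{n}$ (so $\text{Ker}(\Phi_{n})$ consists of the finite sets with an even number of elements below some coordinate, precisely those $F$ with $n\notin F$ in the basis $\{e_k\}$), and then pick $H$ so that $H^{-1}[n+1]$ is a finite-dimensional piece of $\text{Hom}([\omega]^{<\omega},2)$ that is in ``general position'' with respect to $\Phi_{0},\dots,\Phi_{n-1}$.

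First I would make the identification precise: writing elements of $[\omega]^{<\omega}$ as finite $0$--$1$ vectors, $\Phi \in \text{Hom}([\omega]^{<\omega},2)$ is given by an arbitrary element $\chi_{\Phi} \in 2^{\omega}$ (not necessarily finitely supported), acting by $\Phi(F) = \sum_{k \in F} \chi_{\Phi}(k) \bmod 2$, and $\text{Ker}(\Phi_{n}) = \{F \colon n \notin F\}$. Centeredness of the displayed family then unwinds to a purely finitary statement: for any finite $G \subset \text{Hom}([\omega]^{<\omega},2)$ with $H(\Phi) \leqslant n$ for all $\Phi \in G$, there is a finite $F$ with $n \in F$ and $\Phi(F) = 0$ for all $\Phi \in G$ -- i.e.\ the vector $e_{n}$ lies in a coset of $\bigcap_{\Phi \in G}\text{Ker}(\Phi)$ that also contains a set \emph{not} containing $n$, which one arranges by solving a finite homogeneous linear system over $2$ in the coordinates $\neq n$. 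The second observation I would record is the analogue of the ``independent sequence'' used in (2)$\Rightarrow$(1): one can also fix the $\Phi_n$ and $H$ directly and appeal to Lemma~\ref{L:ConvergentSequences}, so no new construction is needed beyond checking centeredness.

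The key step is therefore the bookkeeping in the definition of $H$. I would enumerate a countable dense (in the product topology, or just countable) subset $\{\Psi_{k} \colon k \in \omega\}$ of $\text{Hom}([\omega]^{<\omega},2)$ -- or work with all of $\text{Hom}([\omega]^{<\omega},2)$ at once -- and set $H(\Phi)$ large enough that, once $\Phi$ is ``switched on'' at stage $n = H(\Phi)$, the coordinate $e_{n}$ can always be cancelled: concretely, choose the $\Phi_{n}$ to be the $n$-th coordinate functional and observe that for \emph{any} finite set $G$ of functionals, the system ``find $F \subseteq \omega \setminus \{n\}$ finite with $\Phi(F \cup \{n\}) = 0$ for all $\Phi \in G$'' is solvable as long as the functionals in $G$, restricted to coordinates $\neq n$, do not force $\sum_{k \in F}\chi_{\Phi}(k)$ to equal a fixed nonzero value for every finite $F$ -- which never happens, since one can always add a single extra coordinate $k$ (with $k \notin$ the supports considered so far) on which all $\Phi \in G$ vanish, or pair up coordinates to flip parities. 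In fact, taking $\Phi_n = $ the $n$-th coordinate functional and $H \equiv 0$ (so every $\Phi$ is available at every stage) already works once one notes that for finite $G$ there are infinitely many coordinates $k$ outside the union of the finitely many ``relevant'' supports on which every $\Phi \in G$ agrees with $\Phi_n$ in a way that lets one build an infinite set of witnesses -- this is exactly the computation behind Claim~\ref{Claim:CenteredFamily}.

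The main obstacle I anticipate is not conceptual but notational: making the linear-algebra-over-$\mathbb{F}_{2}$ argument airtight when the functionals $\Phi$ have \emph{infinite} support (elements of $2^{\omega}$, not $[\omega]^{<\omega}$), so that ``the kernel is large'' must be argued coordinate-by-coordinate rather than by a dimension count. One clean way around this is to reduce to the finitely-supported case by first replacing each $\Phi$ in a given finite $G$ by its restriction to a large enough finite block of coordinates (which does not change $\Phi(F)$ for the finitely many $F$ under consideration at a given stage), and then the solvability of the finite $\mathbb{F}_{2}$-system is immediate. With that reduction in hand the verification that each family is centered, and that the generated filter is free, is routine and parallels the proof of Claim~\ref{Claim:CenteredFamily}; one then extends the filter to an ultrafilter $p$ and invokes the (2)$\Rightarrow$(1) direction of Lemma~\ref{L:ConvergentSequences} to conclude.
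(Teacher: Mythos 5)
There is a genuine gap, and it sits exactly at the one point where the theorem has content: the choice of $H$. Your reduction of centeredness to the finitary statement ``for every finite $G$ with $H(\Phi)\leqslant n$ for all $\Phi\in G$ there is a finite $F$ with $n\in F$ and $\Phi(F)=0$ for all $\Phi\in G$'' is correct, and by the kernel fact this system is solvable \emph{if and only if} $\Phi_{n}\notin\operatorname{span}(G)$. Your justification (``add a fresh coordinate on which all $\Phi\in G$ vanish, or pair up coordinates'') ignores precisely the failure case: it breaks down whenever $\Phi_{n}\in\operatorname{span}(G)$, e.g.\ for functionals of infinite support such as the parity map there need not be any coordinate where all of $G$ vanishes. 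In particular your proposed shortcut $H\equiv 0$ is simply false: with $H\equiv 0$ the functional $\Phi_{n}$ itself belongs to $\{\Phi\colon H(\Phi)\leqslant n\}$, and $\bigl([\omega]^{<\omega}\setminus\operatorname{Ker}(\Phi_{n})\bigr)\cap\operatorname{Ker}(\Phi_{n})=\emptyset$, so the family is not centered and Lemma~\ref{L:ConvergentSequences}(2) does not apply. Your earlier ``general position'' remark gestures at the right requirement but is never turned into a construction, so the proposal as written does not prove the statement.

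What is missing is a concrete definition of $H$ guaranteeing $\Phi_{n}\notin\operatorname{span}\bigl(H^{-1}[n+1]\bigr)$ for every $n$. The paper does this by pure linear algebra over the two-element field: take any countable linearly independent $\{\Phi_{n}\colon n\in\omega\}$ (your coordinate functionals are fine), fix a subspace $W$ with $\operatorname{Hom}([\omega]^{<\omega},2)=\operatorname{span}\{\Phi_{n}\colon n\in\omega\}\oplus W$, and set $H(\Phi)=\min\{n\colon\Phi\in\operatorname{span}\{\Phi_{i}\colon i<n\}\oplus W\}$. Then every finite $G\subseteq H^{-1}[n+1]$ lies in $\operatorname{span}\{\Phi_{i}\colon i<n\}\oplus W$, which does not contain $\Phi_{n}$, and the fact that $\bigcap_{\Phi\in G}\operatorname{Ker}(\Phi)\subseteq\operatorname{Ker}(\Phi_{n})$ would force $\Phi_{n}\in\operatorname{span}(G)$ yields a point of $\operatorname{Ker}(\Phi_{n})^{c}\cap\bigcap_{\Phi\in G}\operatorname{Ker}(\Phi)$; translating the subgroup $\operatorname{Ker}(\Phi_{n})\cap\bigcap_{\Phi\in G}\operatorname{Ker}(\Phi)$ by that point even gives infinitely many. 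This kernel--span fact also disposes of your worry about functionals with infinite support, with no need to truncate supports. Once $H$ is defined this way, your appeal to Lemma~\ref{L:ConvergentSequences} (and the freeness argument as in Claim~\ref{Claim:CenteredFamily}) goes through.
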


\begin{proof}
We will show that the second clause of the Lemma \ref{L:ConvergentSequences} holds. To see this, choose any countable linearly independent set 
$\{\Phi_{n} \colon n \in \omega\} \subset \text{Hom}([\omega]^{<\omega},2)$. Let 
$W$ be a vector subspace of $\text{Hom}([\omega]^{<\omega},2)$ such that $\text{Hom}([\omega]^{<\omega},2)=\text{span}\{\Phi_{n} \colon n \in \omega\} \oplus W$. We define the mapping $H \colon\text{Hom}([\omega]^{<\omega},2)\to \omega$ as follows: 
\[
H(\Phi)=\min\{n \colon \Phi \in \text{span}\{\Phi_{i} \colon i<n\} \oplus W\}.
\]
Now, let $n \in \omega$ and fix a finite set $\{\Phi^{j} \colon j<m\} \subset H^{-1}[n+1]$. In order to show that 
\[
\text{Ker}(\Phi_{n})^{c} \cap \bigcap_{j<m} \text{Ker}(\Phi^{j})
\]
is infinite, we shall need a fact concerning linear functionals on a vector space.

\begin{fact}[\cite{MR1741419}, p. 124]
Let $V$ be a vector space and $\Phi, \Phi^{0}, \dots, \Phi^{m-1}$ linear functionals on $V$. Then the following statements are equivalent:
\begin{enumerate}[(1)]
    \item $\bigcap_{j<m} \text{Ker}(\Phi^{j}) \subset \text{Ker}(\Phi)$.
    \item $\Phi \in \text{span}\{\Phi^{j} \colon j<m\}$.\hfill $\square$
\end{enumerate}
\end{fact}

Using this fact, and noting that $\Phi_{n} \notin \text{span}\{\Phi^{j} \colon j<m\}$, one sees that 
\[
\text{Ker}(\Phi_{n})^{c} \cap \bigcap_{j<m} \text{Ker}(\Phi^{j}) \ne \emptyset.
\]
Pick an arbitrary $a \in \text{Ker}(\Phi_{n})^{c} \cap \bigcap_{j<m} \text{Ker}(\Phi^{j})$ and put
\[
K = \text{Ker}(\Phi_{n}) \cap \bigcap_{j<m} \text{Ker}(\Phi^{j}).
\]
Then $K$ is an infinite set, and hence $a + K$ is an infinite set too. But 
\[
a+K \subset \text{Ker}(\Phi_{n})^{c} \cap \bigcap_{j<m} \text{Ker}(\Phi^{j}),
\]
so we are done.
\end{proof}

\medskip

\begin{corollary}[$\CH$]\label{Cor:P-point}
There is a P-point $p \in \omega^{*}$ such that $(\mathsf{Ult}_{p}([\omega]^{<\omega}),\tau_{\, \overline{\text{Bohr}}})$ has non-trivial convergent sequences.
\end{corollary}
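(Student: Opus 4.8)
The plan is to rerun the construction in the previous theorem, but carry out the extension of the $F_\sigma$-filter $\mathcal F$ to an ultrafilter $p$ \emph{inside a transfinite recursion of length $\omega_1$} that simultaneously destroys all potential witnesses to $p$ not being a P-point. Recall that by Lemma \ref{L:ConvergentSequences}, any ultrafilter $p$ extending the free filter $\mathcal F$ generated by the centered family $\bigcup_{n\in\omega}\{A^1_{\Phi_n,n}\}\cup\{A^0_{\Phi,n}\colon H(\Phi)\leqslant n\}$ from Claim \ref{Claim:CenteredFamily} already makes $(\mathsf{Ult}_p([\omega]^{<\omega}),\tau_{\,\overline{\text{Bohr}}})$ have a non-trivial convergent sequence. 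So the only thing left is to show that, under $\CH$, such a $p$ can be taken to be a P-point.

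First I would fix, using $\CH$, an enumeration $\langle g_\xi\colon \xi<\omega_1\rangle$ of all functions $g\colon\omega\to\omega$, together with an enumeration $\langle X_\xi\colon\xi<\omega_1\rangle$ of $[\omega]^\omega$ (the latter is convenient for also arranging $p$ to be an ultrafilter, not merely a filter). Then I would build an increasing chain $\langle \mathcal F_\xi\colon \xi<\omega_1\rangle$ of free filters, each generated by a centered family of size $<\omega_1$ (hence, under $\CH$, having a base of size $\leqslant\aleph_0$ is not guaranteed, but size $<\mathfrak c=\aleph_1$, i.e. countable at each stage), starting from $\mathcal F_0=\mathcal F$ and taking unions at limits. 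At stage $\xi+1$, given $\mathcal F_\xi$ with a countable base, I would handle the function $g_\xi$: if there is a set $Y$ in $\langle\mathcal F_\xi\rangle^+$ on which $g_\xi$ is constant, do nothing extra at this stage (that value will be taken care of once $p$ is determined); otherwise, find $Y\in\langle\mathcal F_\xi\rangle^+$ on which $g_\xi$ is finite-to-one and add $Y$ to the filter. The standard diagonalization argument — given a countable decreasing tower, one finds a pseudo-intersection on which a prescribed non-constant function is finite-to-one — is exactly the one used in the classical $\CH$ construction of P-points, and it goes through here because at each stage the filter is countably generated.

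The main obstacle, and the one point requiring care, is compatibility: when I add a finite-to-one set $Y$ for $g_\xi$, I must make sure $\mathcal F_\xi\cup\{Y\}$ is still centered, i.e. $Y$ meets every finite intersection of members of (a base of) $\mathcal F_\xi$ in an infinite set. This is automatic if I always choose $Y$ as a \emph{pseudo-intersection of a countable base of $\mathcal F_\xi$} refined so that $g_\xi\restr Y$ is finite-to-one; such $Y$ exists precisely because $\mathcal F_\xi$ is a non-principal countably generated filter and $g_\xi$ is (on a positive set) not constant, so one can diagonalize through the levels of $g_\xi$ down the tower. One should also check that $\mathcal F_0=\mathcal F$ itself, being $F_\sigma$ hence countably generated, admits this treatment at the first limit-free step — but this is immediate from the Remark preceding the theorem. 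Once the recursion is complete, $p=\bigcup_{\xi<\omega_1}\langle\mathcal F_\xi\rangle$ (after also deciding each $X_\xi$ along the way, interleaving those steps) is an ultrafilter extending $\mathcal F$, and by construction every $g\colon\omega\to\omega$ is either constant or finite-to-one on a set in $p$, i.e. $p$ is a P-point. Lemma \ref{L:ConvergentSequences} then gives the non-trivial convergent sequence in $(\mathsf{Ult}_p([\omega]^{<\omega}),\tau_{\,\overline{\text{Bohr}}})$.
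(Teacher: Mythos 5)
Your overall strategy (take the $F_\sigma$ filter $\mathcal F$ from Claim \ref{Claim:CenteredFamily}, extend it under $\CH$ to a P-point, and quote Lemma \ref{L:ConvergentSequences}) is exactly the paper's strategy; the paper simply cites the known theorem that under $\CH$ every $F_\sigma$-filter extends to a P-point, while you try to reprove that theorem by the classical $\omega_1$-recursion. The problem is that your recursion rests on a claim that is false here: you assert that $\mathcal F_0=\mathcal F$ is countably generated ("generated by a centered family of size $<\omega_1$", and later "being $F_\sigma$ hence countably generated"). Neither justification works. The generating family of Claim \ref{Claim:CenteredFamily} contains $A^0_{\Phi,n}$ for \emph{every} $\Phi$ with $H(\Phi)\leqslant n$, and for each $n\geqslant 1$ there are $\mathfrak c$ many such $\Phi$ (all of $\operatorname{span}\{\Phi_i\colon i<n\}\oplus W$, and $W$ has size $\mathfrak c$), so the family has size $\mathfrak c$, not $\aleph_0$. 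And "$F_\sigma$ implies countably generated" is false in general (the dual of the summable ideal is an $F_\sigma$ filter with no countable base). Consequently, at stage $0$ — and hence at every stage, since each $\mathcal F_\xi$ contains all of $\mathcal F$ — there is no "countable base" of the current filter to take a pseudo-intersection of, and the standard diagonalization step you invoke ("find a pseudo-intersection of a countable base of $\mathcal F_\xi$ on which $g_\xi$ is finite-to-one") simply has no starting point. This is precisely why the paper appeals to the nontrivial $F_\sigma$-extension theorem: its proof uses the structure of $F_\sigma$ filters (Mazur's lower semicontinuous submeasure representation, which yields that one can diagonalize the countably many sets added so far while preserving positivity with respect to the \emph{whole}, uncountably generated, filter $\mathcal F$), not countable generation. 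To repair your argument you would have to supply exactly that ingredient, e.g.\ maintain a $\subseteq^*$-decreasing $\omega_1$-tower of $\mathcal F$-positive sets and use the submeasure to keep each new set $\mathcal F$-positive.

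A smaller but real slip: at the stage handling $g_\xi$, if $g_\xi$ is constant on some positive set you propose to "do nothing extra"; that is not enough, since nothing then guarantees that the final ultrafilter contains \emph{any} set on which $g_\xi$ is constant or finite-to-one. You must actually add a positive set witnessing constancy (or finite-to-one-ness) to the filter at that stage, as in the classical construction.
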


\begin{proof}
It is well-known (\emph{e.g.,} see \cite{MR3692233}) that assuming $\CH$ every $F_{\sigma}$-filter can be extended to a P-point.
\end{proof}

As $(\mathsf{Ult}_{p}([\omega]^{<\omega}),\tau_{\, \overline{\text{Bohr}}})$ is a topological subgroup of  $(\mathsf{Ult}_{p}^{\omega_1}([\omega]^{<\omega}),\tau_{\, \overline{\text{Bohr}}})$ there are ultrafilters (even P-points assuming {\sf CH}) such that $\mathsf{Ult}_{p}^{\omega_1}([\omega]^{<\omega})$ has a non-trivial convergent sequence.

\medskip

Selective ultrafilters and Q-points, have immediate combinatorial reformulations 
relevant in our context. Given a non-empty set $I$ and $\mathbb{G}$ a Boolean group, we shall call a set $\{ f_{i} \colon i \in I \}$ of functions $f_{i} \colon \omega \to \mathbb{G}$ \emph{$p$-independent} if 
\[
\left\{n \colon a + \sum_{i \in E}f_{i}(n) = \varnothing\right\} \notin p
\]
for every non-empty finite set $E \subset I$ and every $a \in \mathbb{G}$. Note that, in particular, a function $f \colon \omega \to \mathbb{G}$ is not constant on an element of $p$ if and only if $\{f\}$ is $p$-independent. Now, we will say that a function $f \colon I \to \mathbb{G}$ is \emph{linearly independent} if $f$ is one-to-one and $\{f(i) \colon i \in I\}$ is a linearly independent set and, a function $f \colon I \to \mathsf{ult}_{p}(\mathbb{G})$ is \emph{$p$-independent} if $f$ is one-to-one and $\{f_{i} \colon i \in I\}$ is a $p$-independent set, where $f(i)=[f_{i}]$ for $i \in I$.
\medskip

\begin{proposition}\label{selective} Let $p\in\omega^*$ be an ultrafilter. Then:
\begin{enumerate}[\upshape (1)]
\item $p$ is a Q-point if and only if for every finite-to-one function $f \colon \omega\to [\omega]^{<\omega}$ there is a set $U\in p$ such that $\restr{f}{U}$ is linearly independent.
\item The following are equivalent
\begin{enumerate}[\upshape (a)]
\item $p$ is selective;
\item for every function $f \colon \omega \to [\omega]^{<\omega}$ which is not constant on an element of $p$ there is a set $U\in p$ such that $\restr{f}{U}$ is linearly independent;
\item for every $p$-independent set $\{f_n \colon n\in\omega \}$ of functions $f_n \colon \omega\to [\omega]^{<\omega}$, there is a set $U \in p$ and a function $g \colon \omega \to \omega$ so that $\restr{f_{n}}{U \setminus g(n)}$  
is one-to-one for  $n \in \omega$, 
$f_{n}[U \setminus g(n)] \cap f_{m}[U \setminus g(m)] = \varnothing$ if $n \ne m$, and 
\[
\bigsqcup_{n \in \omega} f_{n}[U \setminus g(n)]
\]
is  linearly independent.\footnote{Here $\sqcup$ denotes the disjoint union.}
\end{enumerate}
\end{enumerate}
\end{proposition}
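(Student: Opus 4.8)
The plan is to prove Proposition~\ref{selective} by translating each clause into a statement about partitions of $\omega$ and linear independence in the Boolean group $[\omega]^{<\omega}$, using the standard characterizations of $Q$-points and selective ultrafilters as the backbone. Throughout I will freely identify $[\omega]^{<\omega}$ with the Boolean group it names, and I will use repeatedly the elementary fact that a finite set $\{a_1,\dots,a_k\}\subseteq[\omega]^{<\omega}$ is linearly independent over $\mathbb{F}_2$ if and only if no nonempty subsum of the $a_i$'s equals $\varnothing$, together with the fact that if the $a_i$ are pairwise disjoint and nonempty then they are automatically linearly independent.

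\textbf{Proof of (1).} For the forward direction, suppose $p$ is a $Q$-point and let $f\colon\omega\to[\omega]^{<\omega}$ be finite-to-one. Choose an injective enumeration $\{e_k\colon k\in\omega\}$ of $\bigcup_{n}f(n)\subseteq\omega$ and consider the finite-to-one function $F\colon\omega\to\omega$ given by $F(n)=\max\{k\colon e_k\in f(n)\}$ (set $F(n)=0$ if $f(n)=\varnothing$). By the $Q$-point property there is $U\in p$ on which $F$ is one-to-one; after removing at most one point of $U$ we may assume $\varnothing\notin f[U]$. Enumerate $U=\{n_0<n_1<\cdots\}$; then the top elements $\max_k f(n_j)$ (in the $e$-enumeration) are all distinct, which shows $\restr{f}{U}$ is one-to-one and, by an easy induction on $j$ peeling off the largest index, that $\{f(n_j)\colon j\in\omega\}$ is linearly independent. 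For the converse, given a finite-to-one $F\colon\omega\to\omega$, apply the hypothesis to the finite-to-one function $n\mapsto\{F(n)\}\in[\omega]^{<\omega}$ (this is finite-to-one since $F$ is); linear independence of $\restr{(n\mapsto\{F(n)\})}{U}$ forces in particular injectivity of $F$ on $U$, so $p$ is a $Q$-point.

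\textbf{Proof of (2).} For (a)$\Rightarrow$(b): assume $p$ selective and $f\colon\omega\to[\omega]^{<\omega}$ is not constant on any element of $p$. First, since $p$ is a $P$-point, there is $U_0\in p$ on which $f$ is either constant or finite-to-one; it cannot be constant, so $f$ is finite-to-one on $U_0$. Now apply (1), which holds since $p$ is a $Q$-point, to get $U\subseteq U_0$ in $p$ with $\restr{f}{U}$ linearly independent. The implication (b)$\Rightarrow$(a) splits: taking $f$ ranging over functions $n\mapsto\{F(n)\}$ with $F$ finite-to-one recovers the $Q$-point property via (1); and given an arbitrary non-constant-mod-$p$ $f\colon\omega\to\omega$, applying (b) to $n\mapsto\{f(n)\}$ yields $U\in p$ on which $n\mapsto\{f(n)\}$ is one-to-one, i.e.\ $f$ is one-to-one, which is the $P$-point characterization (every function is constant or one-to-one on a set in $p$). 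The real work is (b)$\Leftrightarrow$(c). For (c)$\Rightarrow$(b): given $f\colon\omega\to[\omega]^{<\omega}$ not constant mod $p$, the singleton family $\{f\}$ is $p$-independent by the remark preceding the proposition, so (c) applied to it (with the index set $\{0\}$, or rather a one-element family) gives $U\in p$ and $g(0)\in\omega$ with $\restr{f}{U\setminus g(0)}$ one-to-one and $f[U\setminus g(0)]$ linearly independent — and $U\setminus g(0)\in p$. For (b)$\Rightarrow$(c), which is the substantive direction: let $\{f_n\colon n\in\omega\}$ be $p$-independent. The idea is to diagonalize. Code the whole family by a single function: roughly, enumerate the countably many pairs $(n,k)$ and build a function $f^*\colon\omega\to[\omega]^{<\omega}$ that on a suitable block records $f_n(k)$ for appropriate $(n,k)$, arranged so that $f^*$ is not constant mod $p$; apply (b) to obtain $U\in p$ with $\restr{f^*}{U}$ linearly independent; then read off $g$ and the tails $U\setminus g(n)$ from $U$. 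More carefully, one uses the $P$-point property to first thin $U$ so that for each $n$ the values $\{f_n(k)\colon k\in U\}$ behave uniformly, and the $Q$-point property (via clause (1)) to make each $\restr{f_n}{U\setminus g(n)}$ one-to-one and to disjointify the ranges $f_n[U\setminus g(n)]$ by taking $g$ growing fast enough along an enumeration of $U$; finally, $p$-independence is exactly what is needed to rule out any nontrivial $\mathbb{F}_2$-relation among elements of $\bigsqcup_n f_n[U\setminus g(n)]$ once the pieces are separated, giving linear independence of the disjoint union.

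\textbf{Main obstacle.} The only genuinely delicate point is (b)$\Rightarrow$(c): I expect the bookkeeping of the diagonalization to be the crux — one must simultaneously arrange one-to-one-ness of each tail $\restr{f_n}{U\setminus g(n)}$, pairwise disjointness of the ranges, and global linear independence of the disjoint union, all from a single set $U\in p$. The clean way is to encode $\{f_n\}$ into one function to which (b) applies, so that the output set $U$ automatically handles global linear independence, and then to use selectivity once more (a $P$-point plus $Q$-point argument) to extract the fast-growing $g$ witnessing the disjointness and injectivity of the tails; verifying that $p$-independence of the original family is preserved under this coding — i.e.\ that the coded function is not constant on any member of $p$ — is where one has to be careful, and it is precisely here that the hypothesis that \emph{no} nonempty subsum $a+\sum_{i\in E}f_i(n)$ is $\varnothing$ on a set in $p$ gets used in full strength.
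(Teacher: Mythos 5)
Your part (1) and the equivalence (a)$\Leftrightarrow$(b) are essentially fine; in fact your forward direction of (1) (pressing the finite-to-one map $n\mapsto\max\{k: e_k\in f(n)\}$ through the Q-point property and peeling off top elements) is a legitimate and somewhat slicker alternative to the paper's argument, which instead builds an increasing chain of finite subgroups $H_n$, splits $\omega$ into blocks, and takes one point per block. (Minor slip: ``every function is constant or one-to-one on a set in $p$'' is the Ramsey/selectivity characterization, not ``the P-point characterization''; this does not hurt you, since one-to-one implies finite-to-one and you obtain Q-pointness separately.)

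The genuine gap is the direction (b)$\Rightarrow$(c) (equivalently (a)$\Rightarrow$(c)), which is exactly the substantive content of the proposition and which you only sketch. The coding idea does not work as stated: if you encode the family $\{f_n\}$ into a single function $f^*$ via a pairing and apply (b), linear independence of $\{f^*(m)\colon m\in U\}$ only controls the values $f_n(k)$ for those pairs $\langle n,k\rangle$ whose code lies in $U$; for each $n$ this is some section of $U$, not a tail $U\setminus g(n)$ of the \emph{same} set $U\in p$, and clause (c) demands precisely the latter, simultaneously for all $n$. The subsequent appeal to ``P-point to thin $U$'' and ``Q-point to choose $g$ growing fast'' does not repair this: injectivity and disjointness of ranges are the easy part, while the global linear independence of $\bigsqcup_n f_n[U\setminus g(n)]$ is an infinite diagonalization that cannot be extracted from a single application of (b). The paper handles it by first proving that for any finite $p$-independent family and any finite linearly independent $A$, the set of $m$ with $A\sqcup\{f_i(m)\colon i<n\}$ independent lies in $p$ (Fact \ref{F:p-independent}), using this to build a $p$-branching tree $T\subseteq\omega^{<\omega}$ whose nodes record finite independent approximations, and then invoking the Galvin--Shelah theorem that a selective ultrafilter admits a branch $x\in[T]$ with $\operatorname{rng}(x)\in p$; setting $U=\operatorname{rng}(x)$ and $g(n)=\max(\restr{x}{n})$ yields (c). Some such fusion/branch argument (or an equivalent combinatorial characterization of selectivity) is indispensable here, and your proposal neither supplies it nor cites a substitute, so the hardest implication remains unproved.
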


\begin{proof} 
Let us prove (1). Suppose first that $p$ is a Q-point. Let $f \colon \omega\to [\omega]^{<\omega}$ be a finite-to-one function. Recursively define a strictly increasing sequence $\langle n_{k} \colon k \in \omega\rangle$ of elements of $\omega$ and a strictly increasing sequence of finite subgroups $\langle H_{n} \colon n \in \omega\rangle$ of $[\omega]^{<\omega}$ so that 
\begin{enumerate}[\upshape (i)]
    \item $H_{n} \cap \text{rng}(f) \ne \emptyset$ for all $n \in \omega$, and
    \item $n_{k}=\max f^{-1}[H_{k}]\ \& \ f^{\prime\prime}[0,n_{k}] \subset H_{k+1}$, for all $k \in \omega$.
\end{enumerate}

Then partitioning $\omega$ into the union of even intervals, and the union of odd intervals, one of them is in $p$, say
\[
  A=\bigcup_{i \in \omega}[n_{2i},n_{2i+1}) \in p.
\]
Applying Q-pointness we can assume that there exists a $U \in p$ such that 
\[
  |[n_{2i},n_{2i+1}) \cap U|=1 \text{ for every } i \in \omega, 
\]
and $U \subseteq A$. By item (ii) and since $\langle H_{n} \colon n \in \omega\rangle$ is a strictly increasing sequence, it follows that $\restr{f}{U}$ is one-to-one and 
$\{f(n) \colon n\in U \}$ is linearly independent.

Suppose now that for every finite-to-one function $f \colon \omega\to [\omega]^{<\omega}$ there is a $U\in p$ such that $\restr{f}{U}$ is one-to-one and $\{f(n) \colon n\in U \}$ is linearly independent. Let $\langle I_{n} \colon n \in \omega\rangle$ be a partition of $\omega$ into finite sets. Define a finite-to-one function $f \colon \omega \to [\omega]^{<\omega}$ by putting $f(k)=\{n\}$ for each $k \in I_{n}$. Then there is an $U \in p$ such that 
$\restr{f}{U}$ is one-to-one and $\{f(n) \colon n\in U \}$ is linearly independent. Note that necessarily $|I_{n} \cap U| \leqslant 1$ for every $n \in \omega$ and therefore $p$ is a Q-point.  

\medskip

(2) To see (a) implies (b), let $f \colon \omega \to [\omega]^{<\omega}$ be a function which is not constant on an element of $p$. Using P-pointness, we may assume without loss of generality that $f$ is a finite-to-one function. So, by item (1), there is an $U \in p$ such that $\restr{f}{U}$ is one-to-one and $\{f(n) \colon n\in U \}$ is linearly independent.

\medskip

To see (b) implies (a), let $f \colon \omega \to [\omega]^{<\omega}$ be a function which is not constant on an element of $p$. By item (b), there is an $U \in p$ such that $\restr{f}{U}$ is one-to-one and $\{f(n) \colon n\in U \}$ is linearly independent, and hence $p$ is a P-point. To verify that $p$ is a Q-point, notice that every finite-to-one function $f \colon \omega \to [\omega]^{<\omega}$ is not constant on an element of $p$. Thus, by clause (1) we get the desired conclusion.

\medskip

To prove (a) implies (c), assume that $\{f_{n} \colon n \in \omega\}$ is a $p$-independent set of functions $f_n \colon \omega\to [\omega]^{<\omega}$. 

\begin{fact}\label{F:p-independent}
Given a finite $p$-independent set $\{f_{i} \colon i <n\}$, and a finite linearly independent set $A \subset [\omega]^{<\omega}$, the set of all $m \in \omega$ such that $A \sqcup \{f_{i}(m) \colon i <n\}$ is linearly independent, belongs to $p$.\hfill $\square$
\end{fact}

 Using Fact \ref{F:p-independent}, we can recursively construct a $p$-branching tree $T \subset \omega^{<\omega}$ such that for every $t \in T$, it follows that 
\[
\text{succ}_{T}(t)=\{m \colon A_{t} \sqcup \{f_{i}(m) \colon i \leqslant |t|\} \text{ is linearly independent}\},
\]
where $A_{t}=\{f_{i}(t(j)) \colon i < |t| \ \& \ j \in [i,|t|)\}$.

\medskip

By Galvin-Shelah's theorem (\cite[Theorem 4.5.3]{MR1350295}), let $x \in [T]$ be a branch such that $\text{rng}(x) \in p$. Thus, if we put $U = \text{rng}(x)$ and $g(n) = \max(\restr{x}{n})$ for $n \in \omega$, we get the properties  as in (c).

\medskip

Finally, notice that (b) is a particular instance of (c) when 
$\{f_{n} \colon n \in \omega\} = \{f\}$. Therefore, (c) implies (b).
\end{proof}

\medskip

\begin{remark}\label{R:selective}
In the previous theorem, it is possible to change the group $[\omega]^{<\omega}$ to any arbitrary Boolean group and, the conclusions of the theorem remain true.
\end{remark}

\medskip

For technical reasons, it will be necessary to reformulate the notion of $p$-indepen\-dence.

\begin{lemma}\label{L:p-independence}
Let $\mathbb{G}$ be a Boolean group and $0 < \alpha < \omega_{1}$. Then:
\begin{enumerate}[\upshape (1)]
    \item A set $\{ f_{i} \colon i \in I \}$ of functions $f_{i} \colon \omega \to \mathbb{G}$ is $p$-independent if and only if the function 
    \[
      \tilde{f}\colon I \to \mathsf{ult}_{p}^{1}(\mathbb{G})/\mathsf{ult}_{p}^{0}(\mathbb{G})
    \]
    defined by $\tilde{f}(i)=\pi_{0}^{1}([f_{i}])$ for $i \in I$ is linearly independent, where $\pi_{0}^{1} \colon \mathsf{ult}_{p}^{1}(\mathbb{G}) \to \mathsf{ult}_{p}^{1}(\mathbb{G})/\mathsf{ult}_{p}^{0}(\mathbb{G})$ denotes the natural projection.
    \item A set $\{ f_{i} \colon i \in I \}$ of functions $f_{i} \colon \omega \to \mathsf{ult}_{p}^{\alpha}(\mathbb{G})$ is $p$-independent if and only if the set $\{\tilde{f}_{i} \colon i \in I\}$ of  functions $\tilde{f}_{i} \colon \omega \to \mathsf{ult}_{p}^{\alpha}(\mathbb{G})/\mathsf{ult}_{p}^{\alpha^{-}}(\mathbb{G})$ is a $p$-independent set, where each $\tilde{f}_{i}$ is defined by 
    $\tilde{f}_{i}(n)=\pi_{\alpha^{-}}^{\alpha}(f_{i}(n))$ for $n \in \omega$ and   
    \[
      \pi^{\alpha}_{\alpha^{-}} \colon \mathsf{ult}_{p}^{\alpha}(\mathbb{G}) \to \mathsf{ult}_{p}^{\alpha}(\mathbb{G})/ \mathsf{ult}_{p}^{\alpha^{-}}(\mathbb{G})
    \]
    denotes the natural projection.
\end{enumerate}
\end{lemma}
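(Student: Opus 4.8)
The plan is to reduce both clauses to the concrete description of the subgroups being quotiented out as ``(classes of) constant functions'' inside the ambient ultrapower, together with the elementary fact that over the two–element field a function $g\colon J\to V$ into a vector space is linearly independent in the sense of this paper (one–to–one with linearly independent range) precisely when $\sum_{j\in E}g(j)\neq\varnothing$ for every nonempty finite $E\subseteq J$; the only point worth noting here is that this finite–sum condition already entails one–to–oneness, since $g(i)+g(j)\neq\varnothing$ forces $g(i)\neq g(j)$.

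For clause (1) I would simply unwind both sides to the same combinatorial statement. Write $g_E:=\sum_{i\in E}f_i$ (pointwise sum in $\mathbb{G}$) for a nonempty finite $E\subseteq I$. Since $\pi_0^1$ is a homomorphism, $\sum_{i\in E}\tilde f(i)=\pi_0^1([g_E])$, so $\tilde f$ is linearly independent iff $[g_E]\notin\mathsf{ult}_p^0(\mathbb{G})$ for every nonempty finite $E$. Now $\mathsf{ult}_p^0(\mathbb{G})$ is exactly the image of $\mathbb{G}$ under the constant embedding, so $[g_E]\in\mathsf{ult}_p^0(\mathbb{G})$ means there is $a\in\mathbb{G}$ with $\{n\colon g_E(n)=a\}\in p$; as $\mathbb{G}$ is Boolean, $g_E(n)=a$ iff $a+\sum_{i\in E}f_i(n)=\varnothing$, so this is precisely the failure of the instance $(E,a)$ of the defining condition for $p$–independence of $\{f_i\colon i\in I\}$. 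Quantifying over $E$ and $a$ yields the equivalence.

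For clause (2), the direction ``$\Leftarrow$'' is immediate from $\pi:=\pi_{\alpha^-}^{\alpha}$ being a homomorphism: a pair $(E,a)$ witnessing that $\{f_i\colon i\in I\}$ fails to be $p$–independent is carried to $(E,\pi(a))$, since $\{n\colon\pi(a)+\sum_{i\in E}\tilde f_i(n)=\varnothing\}\supseteq\{n\colon a+\sum_{i\in E}f_i(n)=\varnothing\}\in p$. For ``$\Rightarrow$'' I would use the identification $\mathsf{ult}_p^{\alpha}(\mathbb{G})=\mathsf{ult}_p\bigl(\mathsf{ult}_p^{\alpha^-}(\mathbb{G})\bigr)$ coming straight from the definition of the iterated ultrapower, under which $\ker\pi=\mathsf{ult}_p^{\alpha^-}(\mathbb{G})$ sits inside $\mathsf{ult}_p^{\alpha}(\mathbb{G})$ as the constant functions. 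Starting from a pair $(E,\bar a)$ witnessing that $\{\tilde f_i\}$ fails to be $p$–independent, fix $a_0$ with $\pi(a_0)=\bar a$; on $B:=\{n\colon\bar a=\sum_{i\in E}\tilde f_i(n)\}\in p$ the element $h_n:=a_0+\sum_{i\in E}f_i(n)$ lies in $\ker\pi=\mathsf{ult}_p^{\alpha^-}(\mathbb{G})$, so $n\mapsto h_n$ (defined on $B\in p$) represents an element $[h]\in\mathsf{ult}_p\bigl(\mathsf{ult}_p^{\alpha^-}(\mathbb{G})\bigr)=\mathsf{ult}_p^{\alpha}(\mathbb{G})$, and one takes $a:=a_0+[h]$ as the candidate witness for the failure of $p$–independence of $\{f_i\}$: indeed $\sum_{i\in E}f_i(n)=a$ exactly when $h_n$, read as a constant, equals $[h]$.

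The step I expect to be the real obstacle is this last one: one must guarantee that $\sum_{i\in E}f_i(n)=a$ for a set of $n$ lying in $p$, that is, that the $h_n$ are ``$p$–constant'' in $n$. This is exactly the point where the distinction between an element of $\mathsf{ult}_p^{\alpha^-}(\mathbb{G})$ and its image as a constant inside $\mathsf{ult}_p^{\alpha}(\mathbb{G})=\mathsf{ult}_p(\mathsf{ult}_p^{\alpha^-}(\mathbb{G}))$ has to be resolved, and I expect it to require first shrinking $B$ (and perhaps cutting $E$ down) by exploiting the combinatorics of $p$ recorded in Proposition~\ref{selective} together with clause (1) applied at the lower level $\alpha^-$. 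The remaining verifications --- that the projections are homomorphisms, that $h_n\in\ker\pi$, and that the identifications of the lower levels with the constants are the correct ones --- are routine bookkeeping with \L\'os's theorem.
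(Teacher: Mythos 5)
Your clause (1) and the easy half of clause (2) match the paper's own argument: the paper likewise just rewrites the statement that $\sum_{i\in E}[f_i]$ is a constant class as the defining condition of $p$-independence, and the transfer of a failure witness $(E,a)$ to $(E,\pi^{\alpha}_{\alpha^-}(a))$ is implicit in its chain of equivalences. The problem is the remaining implication of (2), which you explicitly leave open: you set up the candidate witness $a_0+[h]$ and then admit you cannot verify that $\{n\colon a_0+[h]+\sum_{i\in E}f_i(n)=\varnothing\}$ belongs to $p$. As written the proposal is therefore incomplete, and the repair you suggest cannot work: the lemma is stated for an arbitrary $p\in\omega^*$, so Proposition \ref{selective} (which concerns selective ultrafilters and Q-points) is not available, and no shrinking of $B$ or trimming of $E$ helps, because under the identification of $\mathsf{ult}_p^{\alpha^-}(\mathbb{G})$ with the constant classes inside $\mathsf{ult}_p^{\alpha}(\mathbb{G})=\mathsf{ult}_p(\mathsf{ult}_p^{\alpha^-}(\mathbb{G}))$ the condition you need, namely $h_n=[h]$ for $p$-many $n$, says precisely that $h$ is $p$-constant, and that is not implied by the hypotheses.

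In fact the obstacle you isolated is genuine for the statement read literally: take $I=\{0\}$ and $f_0\colon\omega\to\mathsf{ult}_p^{\alpha}(\mathbb{G})$ one-to-one with all values in the subgroup $\mathsf{ult}_p^{\alpha^-}(\mathbb{G})$. Injectivity makes $\{f_0\}$ $p$-independent, yet $\tilde f_0$ is identically the neutral element of the quotient, so $\{\tilde f_0\}$ is not $p$-independent; hence the ``only if'' direction cannot be established without an extra hypothesis excluding combinations $a+\sum_{i\in E}f_i$ that land $p$-almost everywhere in $\mathsf{ult}_p^{\alpha^-}(\mathbb{G})$ without being $p$-constant. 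For comparison, the paper's proof of (2) is a two-step chain of ``iff''s whose second step passes from ``$a+\sum_{i\in E}f_i(n)\in\mathsf{ult}_p^{\alpha^-}(\mathbb{G})$ for $p$-many $n$'' to ``$(a+[f])+\sum_{i\in E}f_i(n)=\varnothing$ for $p$-many $n$'', where $f(n)=a+\sum_{i\in E}f_i(n)$; that is exactly the step you could not justify, asserted there without addressing the distinction between the class $[f]$ and its values $f(n)$ viewed as constants. So your gap does not correspond to an argument in the paper that you failed to find; closing it requires either strengthening the hypotheses (in the spirit in which the lemma is actually invoked in Lemma \ref{L:main}) or reformulating the statement, not a further combinatorial appeal to Proposition \ref{selective}.
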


\begin{proof}
To see (1), note that  
\[
\sum_{i \in E}[f_{i}]=[\langle a \rangle]
\]
iff 
\[
\left\{n \colon a + \sum_{i \in E}f_{i}(n) = \varnothing\right\} \in p,  
\]
for every non-empty finite set $E \subset I$ and every $a \in \mathbb{G}$.

\medskip

To see (2). Let $E \subseteq I$ be a non-empty finite set and $a \in \mathsf{ult}_{p}^{\alpha}(\mathbb{G})$ and, notice that 
\[
\left\{n \colon \sum_{i \in E}\tilde{f}_{i}(n) = \pi^{\alpha}_{\alpha^{-}}(a) \right\} \in p
\]
iff 
\[
  \left\{ n \colon a + \sum_{i \in E} f_{i}(n) \in \mathsf{ult}_{p}^{\alpha^{-}}(\mathbb{G})\right\}\in p
\]
iff 
\[
 \left\{ n \colon (a + [f])+ \sum_{i \in E} f_{i}(n)=\varnothing\right\}\in p,
\]
where for some $U \in p$ we have that $f(n)=a + \sum_{i \in E} f_{i}(n)\in \mathsf{ult}_{p}^{\alpha^{-}}(\mathbb{G})$ for $n \in U$.
\end{proof}

\medskip

Note also that if $\text{ht}([f])=\alpha$ for $\alpha > 0$, then $f$ is not constant on an element of $p$ (equivalently, $\{f\}$ is $p$-independent).

\medskip

\begin{lemma}\label{L:main}
 Let $0 < \alpha < \omega_{1}$, $[f] \in \mathsf{ult}_{p}^{\alpha}([\omega]^{<\omega})$ and $p$ a selective ultrafilter. If $f$ is not constant on an element of $p$, then there is a tree $T \in \mathbb{L}_{p}(T_{\alpha})$ with $T \subseteq T_{f}$ such that $\restr{\hat{f}}{\Omega_{0}(T)}$ is linearly independent.\footnote{Here, we are using the notation from the proof of Proposition \ref{p-compact} (1).}
\end{lemma}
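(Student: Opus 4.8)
The statement should be proved by induction on $\alpha<\omega_1$, using the tree representation $T_f$ and $\hat f$ from the proof of Proposition \ref{p-compact}(1). The base case $\alpha=1$ is essentially Proposition \ref{selective}(2), clause (b): if $[f]\in\mathsf{ult}_p^1([\omega]^{<\omega})$ and $f$ is not constant on an element of $p$, then there is $U\in p$ with $\restr fU$ linearly independent, and $\restr{\hat f}{\Omega_0(\restr{T_f}{U})}$ is precisely $\restr fU$ up to the identification. So the work is entirely in the successor and limit steps, where one must exploit selectivity through Proposition \ref{selective}(2)(c), the ``simultaneous linear independence'' clause, and Lemma \ref{L:p-independence}(2), which converts $p$-independence of a family of functions into $p$-independence of the family of their images modulo lower levels.

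First I would handle the successor step $\alpha=\beta+1$. Write $[f]=[\langle f(n):n\in\omega\rangle]$ where, after shrinking the domain to a set in $p$, each $f(n)$ lies in $\mathsf{ult}_p^\beta([\omega]^{<\omega})$, and $\text{ht}(f(n))=\beta$ for $n$ in some set in $p$ (this uses the remark that $f$ not constant on an element of $p$ forces $[f]$ to genuinely sit at level $\beta+1$, together with $\varphi_{\beta,\beta+1}$ being the constant embedding). By Lemma \ref{L:p-independence}(2) applied to the singleton $\{f\}$, the function $n\mapsto \pi^\alpha_{\alpha^-}(f(n))$ is $p$-independent, i.e.\ not constant on an element of $p$; but more is true — I want to treat $\{f(n):n\}$ as a $p$-independent \emph{family} so that Proposition \ref{selective}(2)(c) (in the Boolean-group form of Remark \ref{R:selective}, applied to $\mathbb G=\mathsf{ult}_p^\beta([\omega]^{<\omega})$) gives a single $U\in p$ and a $g:\omega\to\omega$ such that the $f(n)$ restricted to $U\setminus g(n)$ are one-to-one, have pairwise disjoint ranges, and the disjoint union $\bigsqcup_n f(n)[U\setminus g(n)]$ is $p$-independent \emph{as a set of functions} $\omega\to\mathsf{ult}_p^\beta([\omega]^{<\omega})$ — wait, this is not quite the right shape, since those are elements, not functions. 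The correct move: each $f(n)\in\mathsf{ult}_p^\beta$ is itself $[g_n]$ for some $g_n:\omega\to\mathsf{ult}_p^{\beta^-}$; the hypothesis that $f$ is not constant on an element of $p$, pushed down one level via Lemma \ref{L:p-independence}, tells us the family $\{g_n\}$ (indexed appropriately) is $p$-independent; apply Proposition \ref{selective}(2)(c) to get the disjoint-union linear independence for this family; then apply the inductive hypothesis at level $\beta$ to each $f(n)=[g_n]$, choosing the trees $T_{g_n}$ coherently inside the tree structure, and assemble the witnessing tree $T\subseteq T_{f}$ level by level using the disjointness to guarantee that $\restr{\hat f}{\Omega_0(T)}$ — which collects all the leaf values of all the $g_n$'s simultaneously — is linearly independent. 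The limit step $\alpha=\sup\alpha_n$ is analogous: decompose $[f]$ along the first coordinate into pieces living at levels $\alpha_n$, apply the inductive hypothesis at each $\alpha_n$, and glue using Proposition \ref{selective}(2)(c) to keep the union linearly independent across all the pieces.

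The main obstacle, and the step that needs the most care, is the bookkeeping in the successor/limit step: one is not merely asking that each branch-value $\hat f(\text{leaf})$ be nonzero, but that the \emph{entire collection} $\{\hat f(t):t\in\Omega_0(T)\}$ be linearly independent as a subset of $[\omega]^{<\omega}$, and these leaves come from $\omega$-many different sub-ultrapowers $f(n)$ glued together. This is exactly why the ``disjoint union is linearly independent'' strength of Proposition \ref{selective}(2)(c) — rather than the weaker clause (b) — is indispensable: it lets us choose the $U$ and the cut-offs $g(n)$ uniformly so that linear independence is preserved not just within each $f(n)$'s contribution but across all of them, and the disjointness of ranges $f(n)[U\setminus g(n)]$ prevents accidental linear dependences between leaves of different $n$'s. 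I would organize the recursion so that at stage $n$ one commits to finitely much of the tree $T$ (the part below level-$\alpha$ nodes with first coordinate $<n$), invokes Fact \ref{F:p-independent}-style absoluteness to keep the relevant ``good'' sets in $p$, and at the end sets $\text{succ}_T(t)=\text{succ}_{T_f}(t)\cap U\setminus g(|t|)$ suitably; then $T\in\mathbb L_p(T_\alpha)$ because $p$ is closed under the finite modifications involved, and $\restr{\hat f}{\Omega_0(T)}$ is linearly independent by construction. A final routine check confirms $T\subseteq T_f$ and that the identification from Proposition \ref{p-compact}(1) makes $\restr{\hat f}{\Omega_0(T)}$ an honest linearly independent function.
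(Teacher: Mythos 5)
There is a genuine gap. Your plan applies the lemma inductively to each value $f(n)$ separately and then tries to ``glue'' with one application of Proposition~\ref{selective}(2)(c) at the top level, but neither step delivers what is needed. First, the hypothesis that $f$ is not constant on an element of $p$ only says the singleton $\{f\}$ is $p$-independent; it does \emph{not} make the family of representatives $\{g_n\}$ of the values $f(n)$ a $p$-independent family (the range of $f$ can carry arbitrary linear relations), so the very application of (2)(c) you propose is not available without first shrinking the tree -- this is what the paper's intermediate tree $T^{*}$ does, node by node, by applying (2)(b) in the quotient $\mathsf{ult}_{p}^{\beta}/\mathsf{ult}_{p}^{\beta^{-}}$ and Lemma~\ref{L:p-independence}. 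Second, and more fundamentally, even granting that, one application of (2)(c) only yields independence of the values one level down, inside the big group $\mathsf{ult}_{p}^{\beta^{-}}([\omega]^{<\omega})$; the conclusion of the lemma requires the \emph{leaves} of all the sub-elements to be jointly linearly independent in $[\omega]^{<\omega}$, and the inductive hypothesis, being a statement about a single element, gives each piece's leaf set independently with no control across pieces (a union of linearly independent sets need not be linearly independent, and disjointness of ranges upstairs does not prevent dependences downstairs). Pushing independence all the way down simultaneously for countably many branches forces infinitely many applications of (2)(c), one for each finite stage and each rank, and you then need a \emph{single} $U\in p$ almost contained in all of the resulting sets in $p$. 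Your sketch never says where such a $U$ comes from; ``Fact-\ref{F:p-independent}-style absoluteness'' does not supply it.

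This is exactly the machinery the paper builds instead of an induction on $\alpha$: after constructing $T^{*}$ (local $p$-independence at successor ranks, strictly increasing heights at limit ranks), it fixes a countable elementary submodel $M$ and uses P-pointness of the selective $p$ to take $U\in p$ a pseudo-intersection of $p\cap M$, then runs one global recursion over a canonical enumeration of the nodes, maintaining as invariant not mere linear independence but $p$-independence of all values collected so far at each successor rank (plus a $\sup<\min$ height-separation condition at limit ranks), using elementarity to transfer each application of Proposition~\ref{selective}(2)(c) to the fixed $U$ and Fact~\ref{F:linearly-independent} to absorb the finite errors via the cut-offs $g(t_{k}^{T})$. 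Without the strengthened invariant and this uniformization device your assembly step fails, so the proposal as written does not prove the lemma, although several of its ingredients (Lemma~\ref{L:p-independence}, clause (2)(c), finite-stage commitments) are the right ones.
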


\begin{proof}
First, if $\alpha = 1$, then the conclusion of the lemma follows from Proposition \ref{selective} (2) (b). Thus, we may assume that $\alpha \geqslant 2$. 

\medskip

We plan to construct a tree $T \in \mathbb{L}_{p}(T_{\alpha})$ with $T \subseteq T_{f}$, so that the following hold for any $\beta \leqslant \alpha$:

\begin{itemize}
    \item if $\beta > 0$, then $\langle \hat{f}(t) \colon t \in \Omega_{\beta}(T)\rangle$ forms a $p$-independence sequence;
    \item if $\beta = 0$, then $\langle \hat{f}(t) \colon t \in \Omega_{0}(T)\rangle$ forms a linearly independent sequence.
\end{itemize}

\medskip
In order to do this, first, we recursively construct a tree $T^{*} \in \mathbb{L}_{p}(T_{\alpha})$ with $T^{*} \subseteq T_{f}$, so that the following hold for any $t \in T^{*}$ with $\rho_{T^{*}}(t) \geqslant 1$:

\begin{itemize}
    \item if $\text{ht}(\hat{f}(t))=1$, then $\langle \hat{f}(t^{\frown}n) \colon n \in \text{succ}_{T^{*}}(t)\rangle \subset [\omega]^{<\omega}$ forms a linearly independent sequence;
    \item if $\text{ht}(\hat{f}(t))=\beta + 1$ with $\beta \geqslant 1$, then 
    $\langle \hat{f}(t^{\frown}n) \colon n \in \text{succ}_{T^{*}}(t)\rangle \subset \mathsf{ult}_{p}^{\beta}([\omega]^{<\omega})$ forms a $p$-independent sequence;
    \item if $\text{ht}(\hat{f}(t))$ is a limit ordinal, then $\langle \text{ht}(\hat{f}(t^{\frown}n)) \colon n \in \text{succ}_{T^{*}}(t)\rangle$ is a strictly increasing sequence of non-zero ordinals.
\end{itemize}

\medskip

At step $t$. If $\text{ht}(\hat{f}(t))=1$ and $\langle \hat{f}(t^{\frown}n) \colon n \in \text{succ}_{T_{f}}(t)\rangle$ is not constant on an element of $p$, then $\rho_{T_{f}}(t)=1$ and  applying Proposition \ref{selective} (2) (b) there exists $U \in p$ with $U \subseteq \text{succ}_{T_{f}}(t)$ such that $\langle \hat{f}(t^{\frown}n) \colon n \in U\rangle$ is linearly independent. Therefore, in this case we put $\text{succ}_{T^{*}}(t)=U$.

\medskip

If $\text{ht}(\hat{f}(t))=\beta + 1$ with $\beta \geqslant 1$ and $\langle \hat{f}(t^{\frown}n) \colon n \in \text{succ}_{T_{f}}(t)\rangle$ is not constant on an element of $p$, then consider the sequence 
\[
\tilde{f}_{t} \colon \text{succ}_{T_{f}}(t) \to \mathsf{ult}_{p}^{\beta}([\omega]^{<\omega})/ \mathsf{ult}_{p}^{-\beta}([\omega]^{<\omega})
\]
defined by  $\tilde{f}_{t}(n)= \pi^{\beta}_{\beta^{-}}(\hat{f}(t^{\frown}n))$ for $n \in \text{succ}_{T_{f}}(t)$. Since $\langle \hat{f}(t^{\frown}n) \colon n \in \text{succ}_{T_{f}}(t)\rangle$ is not constant on an element of $p$, by Lemma \ref{L:p-independence} (2), the sequence $\tilde{f}_{t}$ is not constant on an element of $p$. Therefore, applying Proposition \ref{selective} (2) (b) and Remark \ref{R:selective}, we can find an element $U \in p$ with $U \subseteq \text{succ}_{T_{f}}(t)$ such that 
$\restr{\tilde{f}_{t}}{U}$ is linearly independent. Thus, by Lemma \ref{L:p-independence} (1), putting $\text{succ}_{T^{*}}(t)=U$ we can conclude that $\langle \hat{f}(t^{\frown}n) \colon n \in \text{succ}_{T^{*}}(t)\rangle$ forms a $p$-independent sequence.

\medskip

If $\text{ht}(\hat{f}(t))=\beta$ is a limit ordinal, then for every $\delta < \beta$ we set  
$U_{\delta}=\{n \in \text{succ}_{T_{f}}(t) \colon \text{ht}(\hat{f}(t^{\frown}n))=\delta\}$. Then 
\[
\bigsqcup_{\delta < \beta}U_{\delta}=\text{succ}_{T_{f}}(t),
\]
where each $U_{\delta} \notin p$. The selectiveness of $p$ implies that there is an $U \in p$ such that $|U \cap U_{\delta}|\leqslant 1$ for every $\delta < \beta$. Thus, in this case put $\text{succ}_{T^{*}}(t)=U \setminus U_{0}$. This concludes recursive construction of $T^{*}$.

\medskip
Notice that $\rho_{T^{*}}(t)=\text{ht}(\hat{f}(t))$ for every $t \in T^{*}$. Now given a tree $T^{\prime} \in \mathbb{L}_{p}(T_{\alpha})$ with $T^{\prime} \subseteq T^{*}$, we can canonically list its members $t^{\prime} \in T^{\prime}$ as $\{t_{k}^{T^{\prime}} \colon k <\omega\}$ so that  

\begin{itemize}
    \item $t_{k}^{T^{\prime}} \subset t_{l}^{T^{\prime}}$ entails 
    $k < l$; 
    \item $t_{k}^{T^{\prime}}=t^{\frown} n$, $t_{l}^{T^{\prime}}=t^{\frown} m$, $\text{ht}(\hat{f}(t))$ is a limit ordinal, and $\text{ht}(\hat{f}(t^{\frown} n))<\text{ht}(\hat{f}(t^{\frown} m))$ entails $k < l$;
    \item $t_{k}^{T^{\prime}}=t^{\frown} n$, $t_{l}^{T^{\prime}}=t^{\frown} m$, $\text{ht}(\hat{f}(t))$ is a successor ordinal, and $n < m$ entails $k < l$.
\end{itemize}

Choose a sufficiently large regular cardinal $\theta$ and a countable elementary submodel $M$ of $\langle H(\theta), \in \rangle$ containing all the relevant objects such as $p$ and $T^{*}$. Fix $U \in p$ so  that $U$ is a pseudo-intersection of $p \cap M$. Put  $T^{**}=\restr{T^{*}}{U}$ and $V_{t} = \text{succ}_{T^{**}}(t)$ for $t \in (T^{**})^{+}$. 

\medskip

We unfix $t$, and construct by recursion on $k$ the required condition $T=\{t_{k}^{T} \colon k \in \omega\} \in \mathbb{L}_{p}(T_{\alpha})$ with $T \subseteq T^{**}$, as well as an auxiliary function $g \colon T^{+} \to \omega$ and sets $W_{t} \subseteq V_{t}$ for $t \in T^{+}$ such that the following are satisfied: 

\begin{enumerate}[\upshape (a)]
    \item $W_{t}= V_{t} \setminus g(t)=\text{succ}_{T}(t)$ for all $t \in T^{+}$ (by definition).
    \item For all $k$, 
      \begin{itemize}
          \item if $\rho_{T}(t^{T}_{k})= 1$, then 
          \[
           \left\langle \hat{f}(t_{l}^{T}{}^{\frown}n) \colon \exists \, l \leqslant k \left( n \in W_{t_{l}^{T}} \ \& \ \rho_{T}(t_{l}^{T}{}^{\frown}n)= 0 \right)\right\rangle \subseteq [\omega]^{<\omega}
          \]
          forms a linearly independent sequence;
          \item if $\rho_{T}(t^{T}_{k})= \beta +1$ with $\beta \geqslant 1$, then
          \[
          \left\langle \hat{f}(t_{l}^{T}{}^{\frown}n) \colon \exists \, l \leqslant k \left( n \in W_{t_{l}^{T}} \ \& \ \rho_{T}(t_{l}^{T}{}^{\frown}n)=\beta\right)\right\rangle \subset \mathsf{ult}_{p}^{\beta}([\omega]^{<\omega})
          \]
          forms a $p$-independence sequence; 
          \item if $\rho_{T}(t^{T}_{k})=\beta$ is a limit ordinal, then 
            \[
            \left\langle \text{ht}(\hat{f}(t_{l}^{T}{}^{\frown}n)) \colon \exists \, l \leqslant k          \left( n \in W_{t_{l}^{T}} \ \& \          \rho_{T}(t_{l}^{T})=\beta\right)\right\rangle
            \]
            forms an one-to-one sequence, and
            \begin{align*}
            &\sup\left\{\text{ht}(\hat{f}(t_{l}^{T}{}^{\frown}n)) \colon \exists \, l < k \left( \rho_{T}(t_{l}^{T}) \ne \beta \ \& \ n \in W_{t_{l}^{T}} \ \& \ \rho_{T}(t_{l}^{T}{}^{\frown}n) < \beta \right)\right\}\\ &< 
            \min\left\{\text{ht}(\hat{f}(t_{k}^{T}{}^{\frown}n)) \colon n \in W_{t_{k}^{T}}\right\}.
            \end{align*}
      \end{itemize}
\end{enumerate}

Before describing the construction let us recall a simple fact from linear algebra:

\begin{fact}\label{F:linearly-independent}
Let $A$ and $B$ be linearly independent sets in a Boolean group with $A$ a finite set. Then there is $A'\subseteq B$ such that $|A'|\leq |A|$ and $A\sqcup (B\setminus A')$ is linearly independent.
\end{fact}

\begin{prooffact}
Let $V_{A}=\text{span}(A)$ and $V_{B}=\text{span}(B)$. Then $\text{dim}(V_{A} \cap V_{B}) \leqslant |A|$, so there exists a set $A^{\prime}\subseteq B$ such that $\text{span}(A^{\prime})=V_{A} \cap V_{B}$. Therefore, 
$|A^{\prime}|\leqslant |A|$ and $A \sqcup (B\setminus A^{\prime})$ is linearly independent.
\end{prooffact}

\medskip
\textsf{Basic step} $k=0$. So $t_{0}^{T}=\varnothing$. We put $g(t_{0}^{T})=0$ and hence $W_{t_{0}^{T}}=V_{t_{0}^{T}}$. The conditions (a) and (b) are immediate. 

\medskip
\textsf{Recursion step} $k > 0$. Assume $W_{t^{T}_{l}}$ (for $l<k$) as well as $\restr{g}{k}$ have been defined so as to satisfy (a) and (b). In particular, we know already $t_{k}^{T}$, for it is of the form $t_{l}^{T}{}^{\frown}n$ for some 
$n \in W_{t_{l}^{T}}$ where $l < k$. Put $\rho_{T}(t^{T}_{k})=\gamma$ and assume $\gamma \geqslant 1$. Note that, since (b) is satisfied for $l$, we must have $\rho_{T}(t_{l}^{T})=\gamma + 1$ and  
\[
\left\langle \hat{f}(t_{j}^{T}{}^{\frown}m) \colon \exists \, j \leqslant l \left( m \in W_{t_{j}^{T}} \ \& \ \rho_{T}(t_{j}^{T}{}^{\frown}m) = \gamma \right)\right\rangle \subset \mathsf{ult}_{p}^{\gamma}([\omega]^{<\omega})
\]
is a $p$-independent sequence. Put
\begin{align*}
A_{l} &= \{t_{l^{\prime}}^{T} \colon l^{\prime} \leqslant k \ \& \ \rho_{T}(t_{l^{\prime}}^{T}) = \gamma\} \\
&\subset \left\{t_{j}^{T}{}^{\frown}m \colon \exists \, j \leqslant l \left( m \in W_{t_{j}^{T}} \ \& \ \rho_{T}(t_{j}^{T}{}^{\frown}m)= \gamma \right) \right\}
\end{align*}
and $ A_{l}^{-}=A_{l}\setminus \{t_{k}^{T}\}$. 

\medskip

If $\gamma= 1$, then applying Proposition \ref{selective} (2) (c) there exists $V \in p$ and a function $g_{l} \colon A_{l} \to \omega$ such that
\[
\left \langle \hat{f}(t^{\frown}m) \colon t \in A_{l} \ \& \ m \in V \setminus g_{l}(t) \right \rangle \subseteq [\omega]^{<\omega}
\]
is a linearly independent sequence. Using the elementarity of $M$ and our assumption about $U$ we conclude that there exists a function $g_{l,U} \colon A_{l} \to \omega$ such that
\[
\left \langle \hat{f}(t^{\frown}m) \colon t \in A_{l} \ \& \ m \in U \setminus g_{l,U}(t) \right \rangle \subseteq [\omega]^{<\omega}
\]
is a linearly independent sequence. Note that $V_{t_{k}^{T}} \setminus g_{l,U}(t_{k}^{T}) \subseteq U \setminus g_{l,U}(t_{k}^{T})$ and $W_{t} \setminus g_{l,U}(t) \subseteq U \setminus g_{l,U}(t)$ for  $t \in A_{l}^{-}$. Since $A_{l}$ is a finite set, using Fact \ref{F:linearly-independent}, we can find a natural number $g(t_{k}^{T}) \geqslant g_{l,U}(t_{k}^{T})$ so that 
\[
 \left \langle \hat{f}(t^{\frown}m) \colon t \in A_{l}^{-} \ \& \ m \in W_{t} \right \rangle \cup \left \langle \hat{f}(t_{k}^{T}{}^{\frown}m) \colon m \in V_{t_{k}^{T}} \setminus g(t_{k}^{T}) \right \rangle
\]
forms a linearly independent sequence, as required.

\medskip

For the case $\gamma= \beta +1$ with $\beta \geqslant 1$, we will proceed in a similar way as the previous case. Given $t\in A_{l}$, let 
\[
\tilde{f}_{t} \colon V_{t} \to \mathsf{ult}_{p}^{\beta}([\omega]^{<\omega})/\mathsf{ult}_{p}^{\beta^{-}}([\omega]^{<\omega})
\]
be defined by $\tilde{f}_{t}(m)=
\pi_{\beta}^{\beta^{-}}(\hat{f}(t^{\frown}m))$ for $m \in V_{t}$. By Lemma \ref{L:p-independence} (2), $\{\tilde{f}_{t} \colon t \in A_{l}\}$ is a $p$-independent set. Thus, applying Proposition \ref{selective} (2) (c) and Remark \ref{R:selective}, we can find an element $V \in p$ and a function $g_{l} \colon A_{l} \to \omega$ such that
\[
\left \langle \tilde{f}_{t}(m) \colon t \in A_{l} \ \& \ m \in V \setminus g_{l}(t) \right \rangle \subseteq \mathsf{ult}_{p}^{\beta}([\omega]^{<\omega})/\mathsf{ult}_{p}^{\beta^{-}}([\omega]^{<\omega})
\]
is a linearly independent sequence. By elementarity of $M$ and the property of $U$ we have that there exists a function $g_{l,U} \colon A_{l} \to \omega$ such that
\[
\left \langle \tilde{f}_{t}(m) \colon t \in A_{l} \ \& \ m \in U \setminus g_{l,U}(t) \right \rangle
\]
is a linearly independent sequence. Since $A_{l}$ is a finite set, $V_{t_{k}^{T}} \setminus g_{l,U}(t_{k}^{T}) \subseteq U \setminus g_{l,U}(t_{k}^{T})$ and $W_{t} \setminus g_{l,U}(t) \subseteq U \setminus g_{l,U}(t)$ for  $t \in A_{l}^{-}$, using Fact \ref{F:linearly-independent}, we can find a natural number $g(t_{k}^{T}) \geqslant g_{l,U}(t_{k}^{T})$ so that
\[
 \left \langle \tilde{f}_{t}(m) \colon t \in A_{l}^{-} \ \& \ m \in W_{t} \right \rangle \cup \left \langle \tilde{f}_{t_{k}^{T}}(m) \colon m \in V_{t_{k}^{T}} \setminus g(t_{k}^{T}) \right \rangle
\]
forms a linearly independent sequence and, by Lemma \ref{L:p-independence} (1), this means that 
\[
 \left \langle \hat{f}(t^{\frown}m) \colon t \in A_{l}^{-} \ \& \ m \in W_{t} \right \rangle \cup \left \langle \hat{f}(t_{k}^{T}{}^{\frown}m) \colon m \in V_{t_{k}^{T}} \setminus g(t_{k}^{T}) \right \rangle \subset \mathsf{ult}_{p}^{\beta}([\omega]^{<\omega})
\]
forms a $p$-independent sequence, as required.

\medskip

If $\gamma$ is a limit ordinal, then applying Proposition \ref{selective} (2) (c) there exists $V \in p$ and a function $g_{l} \colon A_{l} \to \omega$ such that
\[
\left \langle \hat{f}(t^{\frown}m) \colon t \in A_{l} \ \& \ m \in V \setminus g_{l}(t) \right \rangle \subset \mathsf{ult}_{p}^{\gamma^{-}}([\omega]^{<\omega})
\]
is a linearly independent sequence. Thus, proceeding as previous cases, it is possible to find a function $g_{l,U} \colon A_{l} \to \omega$ and a natural number $g(t_{k}^{T}) \geqslant g_{l,U}(t_{k}^{T})$ so that 
\[
 \left \langle \hat{f}(t^{\frown}m) \colon t \in A_{l}^{-} \ \& \ m \in W_{t} \right \rangle \cup \left \langle \hat{f}(t_{k}^{T}{}^{\frown}m) \colon m \in V_{t_{k}^{T}} \setminus g(t_{k}^{T}) \right \rangle
\]
forms a linearly independent sequence. In particular, 
\[
\left \langle \text{ht}(\hat{f}(t^{\frown}m)) \colon t \in A_{l}^{-} \ \& \ m \in W_{t} \right \rangle \cup \left \langle \text{ht}(\hat{f}(t_{k}^{T}{}^{\frown}m)) \colon m \in V_{t_{k}^{T}} \setminus g(t_{k}^{T}) \right \rangle
\]
forms an one-to-one sequence and, since $\gamma$ is a limit ordinal, one sees that without loss of generality, we may assume that
\begin{align*}
  &\sup\left\{\text{ht}(\hat{f}(t_{l}^{T}{}^{\frown}m)) \colon \exists \, l < k \left( \rho_{T}(t_{l}^{T}) \ne \gamma \ \& \ m \in W_{t_{l}^{T}} \ \& \ \rho_{T}(t_{l}^{T}{}^{\frown}m) < \gamma \right)\right\}\\ &< 
  \min\left\{\text{ht}(\hat{f}(t_{k}^{T}{}^{\frown}m)) \colon m \in V_{t_{k}^{T}} \setminus g(t_{k}^{T})\right\},
\end{align*}            
as required.
\end{proof}

\medskip

Now we are ready to prove the main theorem of this section.

\medskip

\begin{proofmain}{\ref{Th:SelectiveUltrafilter}}
According to Proposition \ref{p-compact}, $\mathsf{Ult}_{p}^{\omega_{1}}([\omega]^{<\omega})$ is a Hausdorff $p$-compact topological group. It remains therefore only to show that $\mathsf{Ult}_{p}^{\omega_{1}}([\omega]^{<\omega})$ contains no non-trivial convergent sequences to $\pi([\langle \varnothing\rangle])$. To see this, let $\tilde{f} \colon \omega \to \mathsf{Ult}_{p}^{\omega_{1}}([\omega]^{<\omega})$ be a non-trivial sequence, say $\tilde{f}(n)=\pi(f(n))$ ($n \in \omega$) where $f \colon \omega \to \mathsf{ult}_{p}^{\omega_{1}}([\omega]^{<\omega})$. Without loss of generality we can assume that $\tilde{f}$ is a one-to-one function. Thus, since  
\[
\mathsf{ult}_{p}^{\omega_{1}}([\omega]^{<\omega})=\mathsf{ult}_{p}\left(\bigcup_{\alpha < \omega_{1}}\mathsf{ult}_{p}^{\alpha}([\omega]^{<\omega})\right),
\]
there exists $0 < \alpha < \omega_{1}$ so that $[f] \in \mathsf{ult}_{p}^{\alpha}([\omega]^{<\omega})$ and $f$ is not constant on an element of $p$. By Lemma \ref{L:main}, there is a tree $T \in \mathbb{L}_{p}(T_{\alpha})$ with $T \subseteq T_{f}$ such that $\restr{\hat{f}}{\Omega_{0}(T)}$ is linearly independent. Note that $\hat{f}[\Omega_{0}(T)] \subseteq [\omega]^{<\omega}$. Take $\Phi \in \text{Hom}([\omega]^{<\omega},2)$ so that 
$\hat{f}[\Omega_{0}(T)] \subseteq \Phi^{-1}(1)$. So $\overline{\Phi}([\hat{f}])=1$ and hence $\overline{\Phi}([f])=1$. Thus, $\overline{\Phi}$ is a witness that the sequence $f$ does not $\tau_{\, \overline{\text{Bohr}}}$-converge to $[\langle \varnothing\rangle]$ and, since $\tilde{f}$ is one-to-one, in fact  $\tilde{f}$ does not converge to $\pi([\langle \varnothing\rangle])$.
\end{proofmain}

\medskip

\section{Countably compact group without convergent sequences} \label{cc-group}

In this section we develop the ideas introduced in the previous section into a {\sf ZFC} construction of a countably compact subgroup of $2^\mathfrak c$ without non-trivial convergent sequences. Recall that any boolean group of size $\mathfrak c$ (in particular $\mathsf{Ult}_{p}^{\omega_{1}}([\omega]^{<\omega})$) is isomorphic to $[\mathfrak c]^{<\omega}$. In fact, the extension of homomorphisms produces a (topological and algebraic) embedding $h$ of $(\mathsf{Ult}_{p}^{\omega_{1}}([\omega]^{<\omega}), \tau_{\, \overline{\text{Bohr}}})$ into $2
^\mathfrak c \simeq 2^{\text{Hom}([\omega]^{<\omega}, 2)}$ defined by
$$h([f])(\Phi)= \overline\Phi ([f]).$$

Similarly to the ultrapower construction, we shall extend the Bohr topology $\tau_{\, \text{Bohr}}$ on $[\omega]^{<\omega}$ to a group topology $\tau_{\, \overline{\text{Bohr}}}$ on $[\mathfrak c]^{<\omega}$ to obtain the result. The difference is that rather than using a single ultrafilter, we shall use a carefully constructed $\mathfrak c$-sized family of ultrafilters.

\begin{theorem}\label{Th: main} There is a Hausdorff countably compact topological Boolean group without non-trivial convergent sequences.
\end{theorem}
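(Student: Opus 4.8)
The plan is to build the group $\mathbb G=([\mathfrak c]^{<\omega},\tau_{\,\overline{\text{Bohr}}})$ by a transfinite recursion of length $\mathfrak c$, at each stage extending the Bohr topology on a partial group and simultaneously arranging two things: (i) countable compactness, by guaranteeing that every countable subset already appearing has an accumulation point in the final group; and (ii) the absence of non-trivial convergent sequences, by ensuring that whenever a would-be convergent sequence $\langle a_n\rangle$ is enumerated, some homomorphism $\Phi\in\text{Hom}([\mathfrak c]^{<\omega},2)$ that is continuous in the final topology separates a linearly independent subsequence of it into two infinite pieces with distinct $p$-limits — exactly as in the proof of Proposition~\ref{p:ground-group} and Theorem~\ref{Th:SelectiveUltrafilter}. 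The key difference from Section~\ref{p-comp} is that a single ultrafilter $p$ need not work (Corollary~\ref{Cor:P-point}), so at each step where we must provide a $p$-limit for a newly listed sequence $\langle x_n\rangle$, we get to \emph{choose} an ultrafilter $p_{\langle x_n\rangle}$ tailored to that sequence, building up a $\mathfrak c$-sized coherent family $\{p_\xi:\xi<\mathfrak c\}$ and a correspondingly growing family of continuous characters.

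First I would set up the bookkeeping: fix an enumeration $\langle (\sigma_\xi) :\xi<\mathfrak c\rangle$ of all countable sequences from $[\mathfrak c]^{<\omega}$ (with the usual reflection so that $\sigma_\xi$ only involves coordinates $<\xi$), and by recursion on $\xi$ define an increasing continuous chain of subgroups $G_\xi\le[\mathfrak c]^{<\omega}$ together with a set $\mathcal H_\xi\subseteq\text{Hom}(G_\xi,2)$ of "designated continuous characters" and, for limit-relevant steps, ultrafilters $p_\xi$ witnessing a chosen accumulation point $x_\xi=p_\xi\text{-}\lim \sigma_\xi$. The topology on the final group $\mathbb G$ is the weakest one making every character in $\mathcal H=\bigcup_\xi\mathcal H_\xi$ continuous; Hausdorffness is arranged by always including enough characters to separate points (as in the $\tau_{\,\text{Bohr}}$ case). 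At step $\xi$, given $\sigma_\xi=\langle x_n:n\in\omega\rangle\subseteq G_\xi$, if it already has an accumulation point we do nothing new; otherwise we pick an ultrafilter $p_\xi$ and adjoin a fresh generator $g_\xi$ (a new element of $[\mathfrak c]^{<\omega}$) to play the role of $p_\xi\text{-}\lim x_n$, extending each $\Phi\in\mathcal H_\xi$ to $G_{\xi+1}=\langle G_\xi,g_\xi\rangle$ by $\overline\Phi(g_\xi)=p_\xi\text{-}\lim_n\Phi(x_n)$ — this is forced and consistent exactly because $p_\xi$-limits respect the group operations, just as in formula~(\ref{e:p-lim}). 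This makes $\mathbb G$ countably compact.

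The delicate point — and what I expect to be the main obstacle — is reconciling (i) and (ii): when we add $g_\xi$ as a limit of $\sigma_\xi$, we must make sure we are not thereby creating a \emph{new} non-trivial convergent sequence, and conversely, when later we want to kill a convergent sequence by throwing in a separating character $\Phi$, we must be able to extend $\Phi$ to all of $\mathbb G$ \emph{continuously}, i.e.\ compatibly with all the $p_\eta$-limit constraints already imposed. This is where the work of Section~\ref{p-comp} is reused at the level of a single ultrafilter: the combinatorial heart is a lemma saying that if $\langle x_n\rangle$ is a sequence in the group built so far that is not "trivially" convergent, then for a suitably chosen $p_\xi$ (selective-like relative to the finitely many constraints active so far, using Proposition~\ref{selective} and Lemma~\ref{L:main}-type arguments) one can find, below any prescribed finite set of already-designated characters, a linearly independent subsequence $\langle \hat x_{n_k}\rangle$ living in a "free" part of the group, which can then be split and separated by a fresh character without disturbing the old ones. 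Iterating the ultrapower-style construction of Section~\ref{p-comp} internally (i.e.\ climbing $\omega_1$ many ultrapowers at each of the $\mathfrak c$ outer stages) is what produces enough room: the tree-surgery of Lemma~\ref{L:main} guarantees the linearly independent subsequence exists for \emph{every} non-constant-mod-$p_\xi$ $[f]$, which is precisely the property we need to defeat an arbitrary candidate convergent sequence. The remaining steps — verifying the chain is continuous at limits, that $\mathcal H$ separates points (Hausdorff), that $|\mathbb G|=\mathfrak c$ so it embeds in $2^{\mathfrak c}$ via $h([f])(\Phi)=\overline\Phi([f])$, and that the enumeration catches every sequence — are routine given the set-up. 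I would close by checking that the final diagonalization meets every sequence $\sigma_\xi$: either it was non-trivial and got a separating character, or it was trivial/convergent-to-a-point-already-killed, so no non-trivial convergent sequence survives, while every countable set got an accumulation point, giving a Hausdorff countably compact Boolean group with no non-trivial convergent sequences.
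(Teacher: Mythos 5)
Your frame matches the paper's in outline (enumerate all countable sequences, designate $\{\alpha\}$ or a fresh generator as a $p_\xi$-limit, extend characters by $\overline\Phi(g_\xi)=p_\xi\text{-}\lim_n\Phi(x_n)$, so countable compactness is automatic), but the step you yourself flag as delicate is exactly the one you do not supply, and it cannot be discharged the way you suggest. The anti-convergence requirement is not a stage-by-stage, finitely-constrained extension problem: a character witnessing that a given sequence does not converge must be defined compatibly with \emph{countably many} limit-constraints $\overline\Phi(\{\alpha\})=p_\alpha\text{-}\lim_n\overline\Phi(f_\alpha(n))$, governed by countably many \emph{different} ultrafilters chosen at earlier stages (this is what Claim~\ref{key} isolates via the suitably closed countable sets $D$). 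Appealing to "selective-like" ultrafilters tailored to each single sequence, together with Proposition~\ref{selective} and Lemma~\ref{L:main}, does not address this: those results concern one ultrafilter at a time, selective ultrafilters need not exist in ZFC (so "iterating the $\omega_1$-ultrapower construction internally" is not available in ZFC either), and the paper's own Lemma~\ref{L:ConvergentSequences}, Theorem~3.7 and Corollary~\ref{Cor:P-point} show that badly interacting constraints genuinely can force convergent sequences, so some global coherence condition on the family of ultrafilters is indispensable, not a bookkeeping convenience.

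What is missing is precisely the paper's Claim~\ref{ult-fam}: a ZFC construction of a $\mathfrak c$-sized family $\{p_\alpha\}$ such that for \emph{every} countable $D$ and every family of one-to-one linearly independent sequences $\{f_\alpha:\alpha\in D\}$ there are pairwise disjoint sets $U_\alpha\in p_\alpha$ with $\{f_\alpha(n):\alpha\in D,\ n\in U_\alpha\}$ linearly independent. (The paper gets this from an interval partition $\{I_n\}$ with $|I_n|>n\cdot\sum_{m<n}|I_m|$, the filter of sets with small deficiency in each block, and an almost disjoint family of size $\mathfrak c$ indexing the ultrafilters.) Granted this joint property, one can define a homomorphism freely on the disjointified independent set so as to respect all the limit-constraints on $[D]^{<\omega}$ while taking both values infinitely often on the targeted sequence — which is how the paper closes the loop you leave open. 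Without this (or an equivalent joint combinatorial property of your family $\{p_\xi\}$), your recursion can stall: at the moment you try to adjoin a separating character, the previously imposed constraints may already force its values along the new sequence to converge, and nothing in your proposal rules this out.
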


\begin{proof}
We shall construct a countably compact topology on $[\mathfrak c]^{<\omega}$ starting from $([\omega]^{<\omega}, \tau_{\, \text{Bohr}})$ as follows:

\medskip

Fix  an indexed family $\{f_\alpha \colon \alpha\in [\omega,\mathfrak c)\}\subset  ([\mathfrak c]^{<\omega})^\omega$ of one-to-one sequences such that
\begin{enumerate}
 \item for every infinite $X\subseteq[\mathfrak c]^{<\omega}$ there is an 
       $\alpha\in [\omega,\mathfrak c)$ with $\text{rng}(f_\alpha)\subseteq X$,
 \item each $f_\alpha$ is a sequence of linearly independent elements, and
 \item $\text{rng}(f_\alpha)\subset [\alpha]^{<\omega}$ for every 
       $\alpha\in [\omega,\mathfrak c)$. 
\end{enumerate}

\medskip

Given a sequence 
$\{p_\alpha \colon \alpha\in [\omega,\mathfrak c)\} \subset \omega^*$ define for every $\Phi\in \text{Hom}([\omega]^{<\omega},2)$ its extension $\overline{\Phi}\in  \text{Hom}([\mathfrak c]^{<\omega},2)$ recursively by putting 
$$\overline{\Phi}(\{\alpha\})=p_\alpha\text{-}\lim_{n \in \omega} \overline \Phi (f_\alpha(n)).$$
%

Note that $[\omega]^{<\omega}$ together with the independent set $\{\{\alpha\}: \alpha \in [\omega,\mathfrak c)\}$ generate the group $[\mathfrak c]^{<\omega}$ so the above definition uniquely extends $\Phi$ to a homomorphism $\overline \Phi: [\mathfrak c]^{<\omega}\to 2$.

\medskip

This allows us to define the topology $\tau_{\, \overline{\text{Bohr}}}$ induced by $\{\overline{\Phi}: \Phi \in \text{Hom}([\omega]^{<\omega},2)\}$ on $[\mathfrak c]^{<\omega}$ as the weakest topology making all 
$\overline{\Phi}$ continuous (for $\Phi \in \text{Hom}([\omega]^{<\omega},2)$), or equivalently, the group topology having $\{\text{Ker}(\overline{\Phi}): \Phi \in \text{Hom}([\omega]^{<\omega},2)\}$ as a subbasis of the filter of neighbourhoods of the neutral element $\varnothing$. It follows directly from the above observation that independently of the choice of the ultrafilters the topology is a countably compact group topology on $[\mathfrak c]^{<\omega}$. Indeed, $\{\alpha\} \in \overline{\{f_{\alpha}(n) \colon n \in \omega\}}^{\tau_{\, \overline{\text{Bohr}}}}$ for every $\alpha\in [\omega,\mathfrak c)$, in fact $\{\alpha\}=p_\alpha\text{-}\lim_{n \in \omega}f_\alpha(n)$.

\medskip

 Call a set $D\in [\mathfrak c]^{\omega}$ \emph{suitably closed} if $\omega\subseteq D$ and $\bigcup_{n\in\omega} f_\alpha(n)\subseteq D$ for every $\alpha\in D$. The following claim shows that  the construction  is  locally countable.

\begin{claim}\label{key} The topology $\tau_{\, \overline{\text{Bohr}}}$ contains no non-trivial convergent sequences if and only if
$\forall D\in [\mathfrak c]^{\omega} \text{ suitably closed } \exists \Psi\in \text{Hom}([D]^{<\omega},2) \text{ such that }$
\begin{enumerate}
\item  $\forall \alpha \in D\setminus\omega \ \Psi (\{\alpha\})=p_\alpha\text{-}\lim_{n \in \omega} \Psi(f_\alpha(n))$;
\item $\forall i\in 2 \ |\{ n: \Psi(f_\alpha (n))=i\}|=\omega$.
\end{enumerate}
\end{claim}

\begin{proofclaim}
 Given an infinite  $X\subseteq [\mathfrak c]^{<\omega}$ there is an 
$\alpha\in [\omega,\mathfrak c)$ such that $\text{rng}(f_\alpha)\subseteq X$. Let $D$ be suitably closed with $\alpha\in D$,  and let $\Psi$ be the given homomorphism.  It follows directly from the definition, and property (1) of $\Psi$, that, if  $\Phi=\Psi \restriction [\omega]^{<\omega}$ then in turn $\Psi=\overline{\Phi} \restriction [D]^{<\omega}$, which implies that  
$\langle f_\alpha(n) \colon n \in \omega \rangle$ (and hence also $X$) is not a convergent sequence as $\overline{\Phi}$ takes both values $0$ and $1$ infinitely often on the set $\{ f_\alpha(n) \colon n \in \omega\}$. 

The reverse implication is even more trivial (and not really necessary for the proof).
\end{proofclaim}

\medskip

Note that if this happens then, in particular,
$$K=\bigcap_{\Phi \in \text{Hom}([\omega]^{<\omega},2)}  \text{Ker}(\overline \Phi)$$
is finite, and $[\mathfrak c]^{<\omega}/K$ with the quotient topology is the Hausdorff countably compact group without non-trivial convergent sequences we want.

\medskip

Hence to finish the proof it suffices to produce a suitable family of ultrafilters.

\begin{claim}\label{ult-fam}
There is a family $\{p_\alpha \colon \alpha < \mathfrak{c}\}$ of  free ultrafilters on $\omega $ such that
for every $D\in [\mathfrak c]^\omega $ and $ \{f_\alpha:\alpha \in D\}$ such that each $f_\alpha$ is an one-to-one enumeration of linearly independent elements of $[\mathfrak c]^{<\omega}$ there is a sequence $\langle U_\alpha:\alpha\in D\rangle$ such that
\begin{enumerate}
\item $\{U_\alpha:\alpha\in D\}$ is a family of pairwise disjoint subsets of $\omega$,
\item $U_\alpha \in p_\alpha$ for every $ \alpha\in D $, and
\item $\{  f_\alpha (n): \alpha\in D \ \& \ n \in U_\alpha\}$ is a linearly independent subset of $[\mathfrak c]^{<\omega}$.
\end{enumerate}
\end{claim}
\begin{proofclaim} 
Fix $\{I_n: n\in\omega\}$ a partition of $\omega$ into finite sets such that 
$$ |I_n|>n \cdot \sum_{m<n}|I_m|,$$
and let 
$$\mathcal B=\{ B\subseteq \omega : \ \forall n\in \omega \  |I_n\setminus B| \leqslant \sum_{m<n}|I_m|\}.$$
Note that $\mathcal B$ is a centered family, and denote by $\mathcal F$ the filter it generates. Note also, that if $A$ is an infinite subset of $\omega$ then $\bigcup_{n \in A}I_n\in \mathcal F^+$.

\medskip

Let $\{ A_\alpha:\alpha\in \mathfrak{c}\}$ be any almost disjoint family of size $\mathfrak c$ of infinite subsets of $\omega$, and let, for every $\alpha<\mathfrak c$,  $p_\alpha$ be any ultrafilter on $\omega$ extending $\mathcal F\restriction 
\bigcup_{n \in A_\alpha}I_n$. 

\medskip

To see that this works, let $D=\{\alpha_n:n\in\omega\}$ and a family $\{f_\alpha:\alpha\in D\}$ of one-to-one sequences of linearly independent elements of $[\mathfrak{c}]^{<\omega}$ be given. Let $\{B_n: n\in\omega\}$ be a partition of $\omega$ such that $B_n=^* A_{\alpha_n}$ for every $n\in\omega$, and recursively define a set $B$ such that, $I_0\subseteq B$,  
$$  |I_n\setminus B| \leqslant  \sum_{m<n}|I_m|$$
for every $n >0$, and
$$\{f_{\alpha_n}(m): m\in B\cap I_l, \, l\in B_n\text{ and }n \in \omega\}\text{ is linearly independent}.$$ 

\smallskip

In order to obtain the set $B$ we recursively  use Fact \ref{F:linearly-independent} to construct a sequence $\{C_{l} \colon l \in \omega\}$ of finite sets such that:

\begin{itemize}
    \item $C_{0}=\varnothing$;
    \item If $l>0$, then 
    \begin{enumerate}[(i)]
        \item $C_{l} \subseteq I_{l}$,
        \item $|C_{l}|\leqslant \sum_{i<l} |I_{i} \setminus C_{i}|$, and
        \item $\bigsqcup_{i<l}f_{\alpha_{n_{i}}}[I_{i}\setminus C_{i}]$ is linearly independent, where $n_{i}$ is such that $i \in B_{n_{i}}$.
    \end{enumerate}
\end{itemize}

Put $B=\bigcup_{l \in \omega}I_{l}\setminus C_{l}$. By (ii), it follows that $B\in \mathcal{B}\subseteq \mathcal{F}$. Since $B_{n}=^{*}A_{n}$ and $B \in \mathcal{F}$, is clear that $U_{n}:=B \cap \bigcup_{l\in  B_{n}}I_{l}\in p_{\alpha_{n}}$. By (iii), it follows that $\{f_{\alpha_{n}}(m) \colon m \in U_{n} \text{ and } n \in \omega\}$ is linearly independent. Therefore, the sequence 
$\langle U_{n} \colon n \in \omega\rangle$ is as required.
\end{proofclaim}

\medskip

Now, use this family of ultrafilters as the parameter  in the construction of the topology described above. By Claim \ref{key} it suffices to show that given a suitably closed $D\subseteq \mathfrak c$ 
and $\alpha\in D\setminus \omega$ there is a homomorphism $\Psi: [D]^{<\omega}\to 2$ such that 
\begin{enumerate}
\item  $\forall \alpha \in D\setminus\omega \ \Psi (\{\alpha\})=p_\alpha\text{-}\lim_{n \in \omega} \Psi(f_\alpha(n))$
\item $\forall i\in 2 \ |\{ n: \Psi(f_\alpha (n))=i\}|=\omega$.
\end{enumerate}
By Claim \ref{ult-fam}, there is a sequence 
$\langle U_\alpha:\alpha\in D\setminus \omega \rangle$ such that
\begin{enumerate}
\item $\{U_\alpha:\alpha\in D\setminus\omega \}$ is a family of pairwise disjoint subsets of $\omega$,
\item $U_\alpha \in p_\alpha$ for every $ \alpha\in D\setminus \omega $, and
\item $\{  f_\alpha (n): \alpha\in D\setminus \omega \ \& \ n \in U_\alpha\}$ is a linearly independent subset of $[\mathfrak c]^{<\omega}$.
\end{enumerate}
Enumerate $D\setminus \omega$ as $\{\alpha_n:n\in\omega\}$ so that $\alpha=\alpha_0$. Recursively define a function $h: \{  f_\alpha (n): \alpha\in D\setminus \omega \ \& \ n \in U_\alpha\}\to 2$ so that
\begin{enumerate}

\item  $h$ takes both values $0$ and $1$ infinitely often on $\{  f_{\alpha_0} (n): \ n \in U_{\alpha_0}\setminus \{\alpha_0\}\}$, 
\item $\Psi_0(\{\alpha_0\})=p_{\alpha_0}$-$\lim_{k\in U_{\alpha_0}} \Psi_0(f_{\alpha_0}(k))$, and
\item if $\{\alpha_n\}$ is in the subgroup generated by $\{  f_{\alpha_m} (n): m<n\ \& \ n \in U_{\alpha_m}\}$ then 
$\Psi_n(\{\alpha_n\})=p_{\alpha_n}\text{-}\lim_{k\in U_{\alpha_n}} \Psi_n(f_{\alpha_n}(k))$, and making sure that
\item  $\Psi_n (\{\alpha_n\})=p_{\alpha_n}\text{-}\lim_{k\in U_{\alpha_n}}\Psi_n( f_\alpha(k)).$
\end{enumerate}
Where $\Psi_n$ is a homomorphism defined on the subgroup generated by 
$$\{  f_{\alpha_m} (n): m<n\ \& \ n \in U_{\alpha_m}\}\cup\{\{\alpha_m\}:m<n\}$$
 extending $h\restriction \{  f_{\alpha_m} (n): m<n\ \& \ n \in U_{\alpha_m}\}$.
Then let $\Psi$ be any homomorphism extending $ \bigcup_{m\in\omega} \Psi_m $. Doing this is straightforward given that the  set 
$$\{  f_\alpha (n): \alpha\in D\setminus \omega \ \& \ n \in U_\alpha\}$$
is linearly independent.

\medskip

Finally, note that if we, for $a\in [\mathfrak c]^{<\omega }$, let
$$H(a)(\Phi)= \overline{\Phi}(a) $$
then $H$ is a continuous homomorphism from $[\mathfrak c]^{<\omega}$ to $2^{\text{Hom}([\omega]^{<\omega}, 2)}$ whose kernel is the same group  $K=\bigcap_{\Phi \in \text{Hom}([\omega]^{<\omega})}  \text{Ker}(\overline \Phi)$,
which defines a homeomorphism (and isomorphism) of $[\mathfrak c]^{<\omega}/K$ onto a subgroup of $2^{\text{Hom}([\omega]^{<\omega})}\simeq 2^\mathfrak c$.
\end{proof}

\section{Concluding remarks and questions}

Even though the results of the paper solve longstanding open problems, they also open up very interesting new research possibilities. In Theorem \ref{Th:SelectiveUltrafilter} we showed that if $p$ is a selective ultrafilter then $\mathsf{Ult}_{p}^{\omega_{1}}([\omega]^{<\omega})$ is a $p$-compact  group without non-trivial convergent sequences. This raises the following two interesting questions, the first of which is the equivalent of van Douwen's problem for $p$-compact groups.

\begin{question} Is there in {\sf ZFC} a Hausdorff $p$-compact topological group without a non-trivial convergent sequence? 
\end{question}

A closely related problem asks how much can the property of being selective be weakened in Theorem \ref{Th:SelectiveUltrafilter}. Recall that by Corollary \ref{Cor:P-point} it is consistent that there is a P-point $p$ for which $\mathsf{Ult}_{p}^{\omega_{1}}([\omega]^{<\omega})$ does contain a non-trivial convergent sequence. On the other hand, $\mathsf{Ult}_{p}^{\omega_{1}}([\omega]^{<\omega})\simeq \mathsf{Ult}_{p^\alpha}^{\omega_{1}}([\omega]^{<\omega})$ for every $\alpha<\omega_1$, so there are consistently non-P-points for which $(\mathsf{Ult}_{p}^{\omega_{1}}([\omega]^{<\omega})$ contains no non-trivial convergent sequences.

\begin{question} Is the existence of an ultrafilter $p$ such that  $\mathsf{Ult}_{p}^{\omega_{1}}([\omega]^{<\omega})$ contains no non-trivial convergent sequences equivalent to the existence of a selective ultrafilter?
\end{question}

\begin{question} Is it consistent with {\sf ZFC } that $\mathsf{Ult}_{p}^{\omega_{1}}([\omega]^{<\omega})$ contains a non-trivial convergent sequence for every ultrafilter $p\in\omega^*$?
\end{question}

Assuming $\mathsf{Ult}_{p}^{\omega_{1}}([\omega]^{<\omega})$ contains no non-trivial convergent sequences, it is easy to construct for every $n\in\omega$ a subgroup $\mathbb H$ of $\mathsf{Ult}_{p}^{\omega_{1}}([\omega]^{<\omega})$, such that $\mathbb H^n$ is countably compact while $\mathbb H^{n+1}$ is not. It should be possible to  modify  the construction in Theorem \ref{Th: main} to construct such groups in {\sf ZFC}. These issues will be dealt with in a separate paper.

\smallskip

Another interesting question is:

\begin{question}
Is it consistent with $\ZFC$ that  there is a Hausdorff countably compact topological group without non-trivial convergent sequences of weight $< \mathfrak{c}$? 
\end{question}

Finally, let us recall a 1955 problem of Wallace:

\begin{question}[Wallace \cite{MR67907}] Is every both-sided cancellative countably compact topological semigroup necessarily a group?
\end{question}

It is well known that a counterexample can be recursively constructed inside of any non-torsion countably compact topological group without non-trivial convergent sequences
\cite{MR1328373, MR1426694}. The fact that we do not know how to modify (in {\sf ZFC}) the construction in Theorem \ref{Th: main} to get a non-torsion example of a countably compact group seems surprising. Also the proof  of Theorem \ref{Th:SelectiveUltrafilter} does not seem to easily generalize to  non-torsion groups. Hence:

\begin{question} Is there, in {\sf ZFC }, a non-torsion countably compact  topological group without  non-trivial convergent sequences?
\end{question}

\begin{question} Assume $p\in\omega^*$ is a selective ultrafilter. Does  $(\mathsf{Ult}_{p}^{\omega_{1}}(\mathbb Z), \tau_{\, \overline{\text{Bohr}}})$ contain no non-trivial convergent sequence?
\end{question}

Here the $\tau_{\, \overline{\text{Bohr}}}$ is defined as before as the weakest topology on 
$\mathsf{ult}_{p}^{\omega_{1}}(\mathbb{Z})$ which makes all extensions of homomorphisms from $\mathbb{Z}$ to $\mathbb{T}$ continuous, and the group 
$\mathsf{Ult}_{p}^{\omega_{1}}(\mathbb{Z})=\mathsf{ult}_{p}^{\omega_{1}}(\mathbb{Z})/K$ with $K$ being the intersection of all kernels of the extended homomorphisms.

\bigskip

{\bf Acknowledments.} The authors would like to thank Alan Dow and Osvaldo Guzm\'an for stimulating conversations. The authors also wish to thank the anonymous referee for a thorough reading of the text and for helpful suggestions.


\bibliographystyle{amsplain}

\bibliography{References}


\end{document}